\newtheorem{thm}{Theorem}[section]
\newtheorem{cor}[thm]{Corollary}
\newtheorem{remark}[thm]{Remark}
\newtheorem{lemma}[thm]{Lemma}
\newtheorem{prop}[thm]{Proposition}
\newtheorem{exam}[thm]{Example}
\newtheorem{defn}[thm]{Definition}
\newcommand{\bb}[1]{\mathbb{#1}}
\newcommand{\cl}[1]{\mathcal{#1}}
\begin{document}

\title[The SOH Operator System]{The SOH Operator System}

\author[W.~H.~Ng and V.~I.~Paulsen]{Wai Hin~Ng and Vern I.~Paulsen}
\address{Department of Mathematics, University of Houston,
Houston, Texas 77204-3476, U.S.A.}
\email{rickula@math.uh.edu, vern@math.uh.edu}

\thanks{Research supported in part by NSF grant DMS-1101231}
\subjclass[2000]{Primary 46L15; Secondary 47L25}

\begin{abstract}
In this paper we examine a natural operator system structure on Pisier's self-dual operator space. We prove that this operator system is completely order isomorphic to its dual with the cb-condition number of this isomorphism as small as possible. We examine further properties of this operator system and use it to create a new tensor product on operator systems.
\end{abstract}

\maketitle


\section{Introduction} 

Pisier\cite{Pi1} proved that, for each dimension, there is a unique operator space with the property that it is completely isometrically isomorphic to its dual space. In this paper we study the analogous problem in the matrix ordered setting. Since the dual of a matrix ordered space is still a matrix ordered space, it is natural to ask if a matrix ordered space is completely order isomorphic to its dual. 

Unlike the operator space case,  there are many operator systems that are completely order isomorphic to their matrix-ordered dual.  Since the dual of an operator system also carries a matrix norm, it is natural to ask if an operator system is ever simultaneously completely order isomorphic and completely norm isomorphic to its dual. We will show that this is impossible.  In fact, we will prove that any complete order isomorphism between an operator system and its dual has a cb-condition number that is bounded below by 2.

We will see that for the many standard examples of finite dimensional operator systems that are completely order isomorphic to their duals, the cb-condition number  of this order isomorphism grows unbounded as the dimension tends to infinity.

We will then create a ``natural'' operator system from Pisier's $OH(n)$ spaces, that we denote by $SOH(n)$ and show that these operator systems have the property that there exists a map from the space to its dual that is a complete order isomorphism and has cb-condition number of exactly 2.

We then explore some further properties and applications of the operator systems $SOH(n).$ We prove that subsystems and quotients of $SOH(n)$ are completely order isomorphic to $SOH(m)$ for some $m \le n.$

Finally, we use ``approximate cp-factorization through SOH'' to create a new tensor product on operator systems and examine some of its properties.

\section{Operator System and Operator Space Duality}

We assume that the reader is familiar with the basic definitions and properties of operator spaces, operator systems, completely bounded and completely positive maps.  For more details the reader should see the books \cite{Pa2, Pi2}. We only review the basic definitions of duals of operator spaces and operator systems, since these are the objects that we wish to contrast.



If $V$ is an operator space, then the space of bounded linear functionals on $V$, denoted $V^d$, comes equipped with a natural {\bf dual matrix-norm}. Briefly, a matrix of linear functionals $F= (f_{i,j}) \in M_n(V^d)$ is identified with a linear map $F:V \to M_n$ and we set $\|(f_{i,j})\|_n = \|F\|_{cb}.$

Recall that given a $*$-vector space $V,$ the vector space $M_n(V)$ is also a  $*$-vector space with $*$-operation given by $(v_{i,j})^* = (v_{i,j}^*)^t$ where $^t$ denotes the transpose. By a {\bf matrix order} on $V$ we mean a family of cones of self-adjoint elements, $C_n \subseteq M_n(V)_h,$ that satisfy:
\begin{enumerate}
\item $C_n \cap (-C_n) = \{ 0\},$
\item $M_n(V)$ is the complex span of $C_n,$
\item if $A=(a_{i,j}) \in M_{n,m}$ is a matrix of scalars and $(v_{i,j}) \in C_n,$
then $A^*(v_{i,j}) A = ( \sum_{k,l} \overline{a_{i,k}} v_{k,l} a_{l,j}) \in C_m.$
\end{enumerate}
We call such a $*$-vector space a {\bf matrix-ordered space} and simplify notation, when possible, by setting $C_n = M_n(V)^+.$ Note that if $V_1 \subseteq V$ is a $*$-invariant vector subspace, then the cones $C_n \cap M_n(V_1)$ endow $V_1$ with a matrix-order that we call the {\bf subspace order,} or more simply, we refer to $V_1 \subseteq V$ as the {\bf matrix ordered subspace.} 

Given two matrix ordered spaces $V$ and $W$ we call a map $\phi:V \to W$ {\bf completely positive} provided that $\phi^{(n)}:M_n(V) \to M_n(W)$ is positive for all $n.$

Given a matrix-ordered space $V,$ we let $V^{\ddag}$ denote the vector space of all linear functionals on $V.$ Given a linear functional $f:V \to \bb C,$ if we let $f^*: V \to \bb C$ be the linear functional $f^*(v) = \overline{f(v^*)},$ then this makes $V^{\ddag}$ a $*$-vector space. We identify an $n \times n$ matrix of linear functionals $(f_{i,j})$ with the linear map, $F:V \to M_n$ defined by $F(v) = (f_{i,j}(v)),$ and set $M_n(V^{\ddag})^+$ equal to the cone of completely positive maps. Then this gives a sequence of cones on the dual that satisfy properties (1) and (3), but not generally (2).  
When $V$ is also a normed space, then we let $V^d$ denote the space of bounded linear functionals on $V,$ which is a subspace of $V^{\ddag}$ and is endowed with the subspace order.

However, when $V$ is an operator system, then $V^d$ endowed with this set of cones is a matrix-ordered space and we refer to this as the {\bf matrix-ordered dual of $V$.}

The easiest way to see that these cones span, is to use Wittstock's decomposition theorem \cite{Wi, Pa2} which says that the completely bounded maps on an operator system are the complex span of the completely positive maps. 

Since every operator system $V$ is also an operator space, its dual comes equipped with two structures, an operator space structure and a matrix-order structure. We wish to focus on the contrast between these two structures.

We begin with some examples.  We always identify the dual of $\bb C^n$ with $\bb C^n$ again via the map that sends the standard basis $\{ e_j \}$ to the dual basis $\{ \delta_j \}.$ 

\begin{exam} The identification of  $\ell^{\infty}_n$ with the continuous functions on an $n$ point space makes $\ell^{\infty}_n$ into an operator system with $\sum_j A_j \otimes e_j \in M_m(\ell^{\infty}_n)^+$ iff $A_j \in M_m^+$ for all $j.$ Moreover, a map $\Phi: \ell^{\infty}_n \to M_m$ with $\Phi(e_j) = A_j$ is completely positive iff $A_j \in M_m^+$ for all $j.$  From this is follows that the map $e_j \to \delta_j$ is a complete order isomorphism between $\ell^{\infty}_n$ and $(\ell^{\infty}_n)^{d}.$ Thus, as a matrix-ordered space $\ell^{\infty}_n$ is self-dual.

 On the other hand $\ell^{\infty}_n$ is also an operator space and the normed dual is $\ell^1_n$ via the same identification. The operator space structure on $(\ell^{\infty}_n)^d$ is the operator space $MAX(\ell^1_n) = span \{ u_1,...,u_n \} \subseteq C^*(\bb F_n)$ where $C^*(\bb F_n)$ denotes the full C*-algebra of the free group on $n$ generators and $u_j$ are the generators \cite{Zh}. In this case the norm and cb-norm of the identity map $id: \ell^{\infty}_n \to \ell^1_n$ is $n.$ The cb-condition number is $\|id\|_{cb} \|id^{-1}\|_{cb} =n.$
\end{exam}

\begin{exam} If we consider $M_n$ as an operator system with the usual structure, then \cite{PTT} the map that sends the matrix units $E_{i,j}$ to their dual basis $\{\delta_{i,j} \}$ defines a complete order isomorphism between $M_n$ and $M_n^{d}.$ This map sends the identity operator $I_n = \sum_{j=1}^n E_{j,j}$ to the  trace functional $Tr,$ where $Tr((a_{i,j}) = \sum_{j=1}^n a_{j,j}.$ Thus, $M_n$ is also completely order isomorphic to its dual.

However, recall that the normed dual, with this same identification is the trace class matrices $S_n^1,$ together with their operator space structure. Again the norm, cb-norm, and cb-condition number of the identity map(between these $n^2$ dimensional spaces) is $n.$
\end{exam}

Thus, in both these examples we have operator systems that are completely order isomorphic to their ordered duals, but the identification does not preserve the operator space structure of the dual.

\section{The Operator System SOH(n)}

In this section, for each cardinal number $n,$ we introduce an operator system $SOH(n)$ of dimension $n+1$ based on Pisier's self-dual operator space $OH(n)$ and analyze their properties. In particular, we prove that these operator systems are self-dual as matrix-ordered spaces and that the natural map from $\phi: SOH(n) \to SOH(n)^d$ satisifes $\|\phi \|_{cb} \cdot \|\phi^{-1}\|_{cb} =2,$ which we show is as close to being a complete isometry as is possible for any operator system that is completely order isomorphic to its dual.

We begin with a result that shows that the lower bound of 2 is sharp.

\begin{prop}\label{nonci} Let $\cl S$ be an operator system of dimension at least 2 and assume that $\phi: \cl S \to \cl S^d$ is a complete order isomorphism of $\cl S$ onto its dual space. Then $\|\phi\| \cdot \|\phi^{-1}\| \ge 2.$
\end{prop}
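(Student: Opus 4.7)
The plan is to exploit the decomposition $e = p + (e-p)$ for a carefully chosen positive $p \in \cl S$ with $\|p\| = \|e-p\| = 1$. To build such a $p$, I use $\dim \cl S \geq 2$ to pick a self-adjoint $u \in \cl S$ not in $\bb R e$; setting $a = \inf\{t \in \bb R : u \leq te\}$ and $b = \sup\{t \in \bb R : te \leq u\}$ (both attained by Archimedean closedness of the positive cone, and $a > b$ since $u \notin \bb R e$), I rescale to
\[
v := \frac{2}{a-b}\Bigl(u - \tfrac{a+b}{2}\, e\Bigr),
\]
which satisfies $-e \leq v \leq e$ with both inequalities saturated. Then $p := (e+v)/2$ is positive and, using that the norm of a positive element $q$ equals $\inf\{t : q \leq te\}$, one checks $\|p\| = \|e-p\| = 1$.

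With $p$ in hand, set $f := \phi(e)$. Since $\phi$ is completely positive, each of $f$, $\phi(p)$, $\phi(e-p)$ is a positive linear functional on $\cl S$, and the standard identity $\|g\| = g(e)$ for positive $g$ turns $\phi(p) + \phi(e-p) = f$ into
\[
\|\phi(p)\| + \|\phi(e-p)\| \;=\; f(e).
\]
Relabelling $p$ and $e-p$ if necessary, I may assume $\|\phi(p)\| \leq f(e)/2$; this quantity is strictly positive because $\phi$ is an isomorphism and $p \neq 0$.

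The proof then closes on the two inequalities
\[
\|\phi\| \;\geq\; \frac{\|f\|}{\|e\|} \;=\; f(e), \qquad \|\phi^{-1}\| \;\geq\; \frac{\|p\|}{\|\phi(p)\|} \;\geq\; \frac{2}{f(e)},
\]
whose product is $2$.

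The only nontrivial ingredient is the construction of $p$: one must use the Archimedean property of the matrix order so that the extrema defining $a$ and $b$ are actually attained (and hence $\|(e\pm v)/2\| = 1$ genuinely holds), and the hypothesis $\dim \cl S \geq 2$ to know that some $u$ outside $\bb R e$ exists so that $a > b$. Everything else reduces to the linearity of $\phi$ and the fact that positive functionals on an operator system attain their norm at the unit.
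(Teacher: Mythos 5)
Your proof is correct and is essentially the paper's argument in different packaging: your $p$ and $e-p$ are the paper's positive elements $(MI-H)/(M-m)$ and $(H-mI)/(M-m)$ (your $a,b$ being their $M,m$), and your observation that one of $\|\phi(p)\|,\|\phi(e-p)\|$ is at most $f(e)/2$ replaces their minimization of $\max\{\tfrac{M-m}{M-s},\tfrac{M-m}{s-m}\}$ over $s$. If anything, your explicit bound $\|\phi\|\ge f(e)$ handles the normalization of $\phi(e)$ more transparently than the paper's final display, which tacitly assumes $\delta_0(I)=1$.
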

\begin{proof} Let $I$ denote the identity element of $\cl S$ and let $\delta_0 = \phi(I).$ Choose $H=H^* \in \cl S$ that is not in the span of $I.$ Since $\delta_0$ is positive, $\delta_0(H) \in \bb R.$  Replacing $H$ by $H- \delta_0(H)I$ we may assume that $\delta_0(H) =0.$ Now let $\delta_1= \phi(H),$ which is a self-adjoint functional on $\cl S.$  Set $M = \inf \{ r: rI \ge H \}$ and set $m = \sup \{ rI: H \ge rI \}.$ Since $H$ is not a multiple of $I,$ it follows that $m < M.$ For any real numbers $a,b$ we will have that $\|aI + bH \| = max \{ |a+bM|, |a+bm| \}$ and that $aI +bH \ge 0$ iff $min \{ a+bM, a+ bm \} \ge 0.$ Since $\phi$ is a complete order isomorphism, $a \delta_0 + b \delta_1$ is completely positive iff $min \{a+bM, a+bm \} \ge 0.$

Now note that $\|MI - H \| = M-m = \|H - mI\|$ and that $MI -H \ge 0,$ $H- mI \ge 0,$ and so $M \delta_0 - \delta_1$ and $\delta_1 - m \delta_0$ are both completely positive. Let $\delta_1(I) =s.$  The complete positivity of these last two maps, implies that $\|M \delta_0 - \delta_1\| = (M \delta_0 - \delta_1)(I) = M-s \ge 0$ and that
$\| \delta_1 - m \delta_0 \| = (\delta_1 - m \delta_0)(I) = s - m \ge 0.$ Hence, $m \le s \le M.$

Finally,
\[ \|\phi\| \cdot \|\phi^{-1}\| \ge max \{ \frac{\|MI - H\|}{\|M \delta_0 - \delta_1\|}, \frac{\|H - mI\|}{\| \delta_1 - m \delta_0\|} \} = max \{ \frac{M-m}{M-s}, \frac{M-m}{s-m} \} \ge 2.
 \]
This last inequality follows by observing that the minimum of this maximum over $s$ occurs when $s= (M+m)/2.$
\end{proof}

To construct $SOH$, 
we consider the finite dimensional case, the extension to infinite dimensions is standard. We use a few facts that are implicitly contained in Pisier\cite[Exercise~7.2]{Pi2}.
Fix a Hilbert space of dimension $n$ and let $\{ e_i \}$ be an orthonormal basis. Asume that $OH(n) \subseteq B(\cl H)$ is a completely isometric inclusion, so that $e_i$ are identified with operators. Let \[ H_i = \begin{pmatrix} 0 & e_i\\e_i^* & 0 \end{pmatrix} \in B( \cl H \oplus \cl H), \]
so that the $H_i$'s are self-adjoint operators. 

Given matrices, we have that 
\begin{multline*} \| \sum_i A_i \otimes H_i \| = max \{ \| \sum_i A_i \otimes e_i \|, \| \sum_i A_i \otimes e_i^*\| \} =\\
max \{ \| \sum_i A_i \otimes \overline{A_i} \|^{1/2}, \| \sum_i A_i^* \otimes A_i^t \|^{1/2} \} = \| \sum_i A_i \otimes e_i \|. \end{multline*} 
This last equality follows since $A^t \otimes B^t = (A \otimes B)^t$ and so, 
\[ \| \sum_i A_i^* \otimes A_i^t \| = \| (\sum_i \overline{A_i} \otimes A_i)^t \| =
\| \sum_i \overline{A_i} \otimes A_i \| = \| \sum_i A_i \otimes e_i \|^2. \]
Note in particular, we have that $\| \sum_i A_i \otimes e_i \| = \| \sum_i A_i^* \otimes e_i \| = \| \sum_i A_i^t \otimes e_i \|.$

Thus, the map $e_i \to H_i$ is a complete isometry and we have that $OH(n)$ is also the span of these self-adjoint elements. The particular form of these self-adjoint operators will be useful in the sequel.

For notational convenience we let $H_0$ denote the identity operator on $\cl H \oplus \cl H.$

\begin{defn} We let $SOH(n) \subseteq B(\cl H \oplus \cl H)$ denote the $(n+1)$-dimensional operator system that is the span of the set $\{ H_i: 0 \le i \le n \}.$
\end{defn}

We now examine the norm and order structure on $SOH(n).$

\begin{prop}\label{SOHpos} Let $A_i \in M_m, 0 \le i \le n.$ Then the following are equivalent:
\begin{itemize}
\item $\sum_{i=0}^n A_i \otimes H_i$ is positive,
\item $A_0 \otimes H_0 - \sum_{i=1}^n A_i \otimes H_i$ is positive,
\item $A_0 \in M_m^+,$ $A_i=A_i^*, 1 \le i \le n$ and $-A_0 \otimes \overline{A_0} \le \sum_{i=1}^n A_i \otimes \overline{A_i} \le + A_0 \otimes \overline{A_0},$ in $M_m \otimes M_m = M_{m^2}.$
\end{itemize}
\end{prop}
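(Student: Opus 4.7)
The plan is to rewrite $\sum_{i=0}^n A_i \otimes H_i$ as a $2\times 2$ block operator, apply the standard Schur-complement positivity test for such blocks, and invoke the $OH(n)$ identity $\|\sum_i A_i \otimes e_i\|^2 = \|\sum_i A_i \otimes \overline{A_i}\|$ established just before the definition of $SOH(n)$ to translate the resulting norm bound into a self-adjoint inequality in $M_m \otimes M_m$. Regrouping $\bb C^m \otimes (\cl H \oplus \cl H) \cong (\bb C^m \otimes \cl H) \oplus (\bb C^m \otimes \cl H)$ gives
\[
\sum_{i=0}^n A_i \otimes H_i \;=\; \begin{pmatrix} A_0 \otimes I & X \\ X^* & A_0 \otimes I \end{pmatrix}, \qquad X := \sum_{i=1}^n A_i \otimes e_i,
\]
and positivity forces self-adjointness, hence $A_0=A_0^*$ and $A_i=A_i^*$ for $i \ge 1$, matching the self-adjointness clauses of (c).

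For the equivalence of (a) and (b), I would conjugate the block form above by the unitary $I_m \otimes \bigl(\begin{smallmatrix} I & 0 \\ 0 & -I \end{smallmatrix}\bigr)$. A direct block computation shows that this fixes $H_0$ and negates every $H_i$ with $i \ge 1$, so it carries $\sum_i A_i \otimes H_i$ onto $A_0 \otimes H_0 - \sum_{i\ge 1} A_i \otimes H_i$ without disturbing positivity.

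For (a) $\Leftrightarrow$ (c), I would first assume $A_0$ is invertible. The standard $2\times 2$ block-positivity criterion then says the matrix above is positive iff $A_0 \ge 0$ and the operator $Y := (A_0^{-1/2} \otimes I)X(A_0^{-1/2} \otimes I)$ is a contraction. Setting $B_i := A_0^{-1/2} A_i A_0^{-1/2}$, one has $Y = \sum_i B_i \otimes e_i$, and the $OH$ identity then yields $\|Y\|^2 = \|\sum_i B_i \otimes \overline{B_i}\|$. Since the $B_i$ are self-adjoint, so is $T := \sum_i B_i \otimes \overline{B_i}$, and the contraction condition becomes the self-adjoint bilateral inequality $-I \le T \le I$. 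Finally, using $\overline{B_i} = \overline{A_0}^{-1/2} \overline{A_i} \overline{A_0}^{-1/2}$ to pull $A_0^{\pm 1/2}$ out of the tensor factors produces
\[
T \;=\; (A_0^{-1/2} \otimes \overline{A_0}^{-1/2})\Bigl(\sum_i A_i \otimes \overline{A_i}\Bigr)(A_0^{-1/2} \otimes \overline{A_0}^{-1/2}),
\]
so conjugating by $A_0^{1/2} \otimes \overline{A_0}^{1/2}$ gives $-A_0 \otimes \overline{A_0} \le \sum_i A_i \otimes \overline{A_i} \le A_0 \otimes \overline{A_0}$, which is exactly (c). For singular $A_0$ I would apply the invertible case to $A_0+\epsilon I$ and pass to the limit $\epsilon \to 0^+$, since both (a) and (c) are closed conditions in the data. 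The only substantive step is the single invocation of the $OH$ identity to convert an abstract operator-norm bound on $\sum B_i \otimes e_i$ into a concrete self-adjoint tensor inequality on $\sum B_i \otimes \overline{B_i}$; the rest is block-matrix bookkeeping and a limiting argument.
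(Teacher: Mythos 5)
Your proposal is correct and follows essentially the same route as the paper: write the element as a $2\times 2$ block with diagonal $A_0\otimes I$ and off-diagonal $\sum_i A_i\otimes e_i$, use the contraction/Schur-complement criterion after normalizing by $A_0^{-1/2}$, convert the norm bound via the $OH(n)$ identity $\|\sum_i B_i\otimes e_i\|^2=\|\sum_i B_i\otimes\overline{B_i}\|$ into the bilateral inequality, and handle singular $A_0$ by perturbation; the conjugation by $I\oplus(-I)$ for (a)$\Leftrightarrow$(b) is also exactly the paper's argument. The only cosmetic difference is that the paper obtains $A_0\ge 0$ by adding the expressions in (a) and (b), while you read it off the block criterion.
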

\begin{proof} Let $U = \begin{pmatrix} -I & 0\\0 & I \end{pmatrix} \in B(\cl H \oplus \cl H),$ which is unitary.  Note that $U^*H_0U=H_0$ and $U^*H_iU= - H_i, 1 \le i \le n,$ from which the equivalence of the first two statements follows.

Adding the first two equations shows that $A_0 \ge 0.$ Since a positive element must be self-adjoint it follows that $A_i = A_i^*, 1 \le i \le n.$

To see the final equations, first assume that $A_0$ is positive and invertible. Then $\sum_{i=0}^n A_i \otimes H_i$ is positive iff $(A_0 \otimes H_0)^{-1/2}( \sum_{i=0}^n A_i \otimes H_i)(A_0 \otimes H_0)^{-1/2}$ is positive which is iff $I_m \otimes H_0 + \sum_{i=1}^n B_i \otimes H_i$ is positive, where $B_i = A_0^{-1/2}A_iA_0^{-1/2}.$ As operators on $\cl H \oplus \cl H,$ we have that
\[ \begin{pmatrix} I_{\cl H} & \sum_i B_i \otimes e_i \\ \sum_i B_i \otimes e_i & I_{\cl H} \end{pmatrix} \] is positive.

This last equation is equivalent to requiring that the (1,2)-entry of this operator matrix is a contraction and hence, $\| \sum_i B_i \otimes \overline{B_i} \| \le 1.$  But since these matrices are self-adjoint, this is equivalent to 
\[ - I_m \otimes I_m \le \sum_i B_i \otimes \overline{B_i} \le I_m \otimes I_m. \]
Conjugating this last result by $A_0^{1/2} \otimes \overline{A_0^{1/2}}$ yields the desired inequality.

When $A_0$ is not invertible, one first considers $A_0 + rI_m, r>0$ and then lets $r \to 0.$ This completes the proof. 
\end{proof} 

We now consider the matrix-ordered dual of $SOH(n).$ To this end we let $\delta_i \in SOH(n)^d, 0 \le i \le n$ denote the linear functionals that satisfy, $\delta_i(H_j) = \delta_{i,j}, 0 \le i,j \le n.$

\begin{thm}\label{SOHselfdual} The map $\kappa: SOH(n) \to SOH(n)^d$ defined by $\kappa(H_i) = \delta_i,$  $0 \le i \le n,$ is a complete order isomorphism that satisfies
\[ \| \sum_{i=0}^n A_i \otimes \delta_i \|_{cb} \le \| \sum_{i=0}^n A_i \otimes H_i \| \le 2 \| \sum_{i=0}^n A_i \otimes \delta_i \|_{cb} \] for any matrices $A_0,...,A_n \in M_m$ and any $m$ and $\|\kappa\|_{cb} \cdot \|\kappa^{-1}\|_{cb} =2.$
\end{thm}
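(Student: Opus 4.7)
My plan is to break the proof into three stages: (a) showing that $\kappa$ is a complete order isomorphism, (b) deriving the two norm inequalities, and (c) combining (b) with Proposition~\ref{nonci} to pin down the cb-condition number as exactly $2$.

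For (a), the key is to show that the characterization of the positive cone $M_m(SOH(n))^+$ from Proposition~\ref{SOHpos} describes the dual cone as well: that is, $\Phi \colon SOH(n) \to M_m$ with $\Phi(H_i) = A_i$ is completely positive iff $A_0 \ge 0$, $A_i = A_i^*$ for $i \ge 1$, and $-A_0 \otimes \overline{A_0} \le \sum_{i \ge 1} A_i \otimes \overline{A_i} \le A_0 \otimes \overline{A_0}$. For the ``if'' direction I would reduce by conjugation with $A_0^{1/2}$ and $B_0^{1/2}$ (using a continuity argument when these are not invertible) to showing $I \otimes I + \sum_{l \ge 1} B_l \otimes A_l \ge 0$ whenever both $(A_l)$ and $(B_l)$ are OH-normalized. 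This in turn follows from the OH Cauchy--Schwarz estimate $\|\sum B_l \otimes A_l\| \le \|\sum B_l \otimes \overline{B_l}\|^{1/2}\,\|\sum A_l \otimes \overline{A_l}\|^{1/2}$, which itself is just $\|\Theta \otimes \mathrm{id}_{M_m}\|_{cb} = \|\Theta\|_{cb}$ applied to the map $\Theta: OH(n) \to M_k$, $\Theta(e_l) = B_l$, evaluated on $\sum e_l \otimes A_l \in OH(n) \otimes M_m$. For the ``only if'' direction I would extend $\Phi$ via Arveson's theorem to a CP map on $B(\mathcal{H}\oplus\mathcal{H})$, use Stinespring to write $A_l = V^*\pi(H_l)V$, and read off the tensor inequality from the OH relations satisfied by the dilated $\pi(H_l)$.

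For (b), the upper inequality $\|\sum A_i \otimes \delta_i\|_{cb} \le \|\sum A_i \otimes H_i\|$ follows from the block-matrix structure of $SOH(n)$ inside $B(\mathcal{H}\oplus\mathcal{H})$ combined with the same OH Cauchy--Schwarz. The factor of $2$ in the lower inequality I would obtain from a Wittstock--Paulsen decomposition: given $\|\Phi\|_{cb} \le 1$, there exist CP maps $\phi_1, \phi_2 \colon SOH(n) \to M_m$ with $\|\phi_j(I)\| \le 1$ such that the block map $H_i \mapsto \begin{pmatrix} \phi_1(H_i) & \Phi(H_i) \\ \Phi(H_i)^* & \phi_2(H_i) \end{pmatrix}$ is CP into $M_2(M_m)$. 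Applying (a) at the $2m \times 2m$ level shows that the corresponding element of $M_{2m}(SOH(n))$ is positive, so the standard ``off-diagonal dominated by geometric mean of diagonals'' bound for positive block matrices yields $\|\sum A_i \otimes H_i\| \le \|\sum \phi_1(H_i) \otimes H_i\|^{1/2}\,\|\sum \phi_2(H_i) \otimes H_i\|^{1/2}$. Each of these positive diagonal elements satisfies $\|\sum \phi_j(H_i) \otimes H_i\| \le 2\|\phi_j(I)\| \le 2$, because by the sign-flip equivalence in Proposition~\ref{SOHpos}(1)--(2) both $\sum \phi_j(H_i) \otimes H_i$ and $\phi_j(I) \otimes H_0 - \sum_{i \ge 1}\phi_j(H_i) \otimes H_i$ are positive and sum to $2\phi_j(I) \otimes H_0$. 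Multiplying gives $\|\sum A_i \otimes H_i\| \le 2$, and combining with the lower bound of Proposition~\ref{nonci} yields $\|\kappa\|_{cb}\|\kappa^{-1}\|_{cb} = 2$.

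The principal obstacle I foresee lies in the ``only if'' direction of (a): extracting the OH tensor inequality on $(A_l)$ from pure complete positivity of $\Phi$. Using $\sum A_l \otimes H_l$ itself as a test element would be circular (its positivity is exactly what we want to deduce), so one must route through Arveson/Stinespring and carefully track the OH structure and entrywise-conjugate bookkeeping that naturally appears in the self-duality of $OH$.
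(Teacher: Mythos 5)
Your overall architecture is sound and your factor-of-$2$ argument is correct, but two of your routes differ genuinely from the paper's, and one of your stated obstacles is not actually there. For the ``only if'' direction of (a), the paper does \emph{not} need Arveson/Stinespring and is not threatened by the circularity you fear: it never tests the CP map against $\sum_l A_l \otimes H_l$ itself. Instead it normalizes $\Phi$ to the unital CP map $\Psi(H_i)=B_i=A_0^{-1/2}A_iA_0^{-1/2}$, which is therefore completely contractive, applies $\Psi^{(q)}$ to \emph{arbitrary} norm-one elements $\sum_i C_i \otimes e_i \in M_q(OH(n))$ to get $\|\sum_i B_i \otimes C_i\|\le 1$, and then invokes operator-space self-duality of $OH(n)$ to conclude $\|\sum_i B_i\otimes \overline{B_i}\|\le 1$ --- the supremum over test families $(C_i)$ computes exactly the $OH$-norm of $(B_i)$. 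Your Stinespring route does work (with $A_l=V^*\pi(H_l)V$, $V^*V=A_0$, one conjugates $-I\le \sum_l \pi(H_l)\otimes\overline{\pi(H_l)}\le I$ by $V\otimes\overline{V}$), but it silently requires the computation $\|\sum_l H_l\otimes\overline{H_l}\|\le 1$, i.e.\ $\|\sum_l e_l\otimes\overline{e_l}\|_{\min}=1$ for $OH(n)$; you should make that explicit (it follows from the norm identity of Section 3 applied with $A_l=\overline{H_l}$). Your step (b) upper inequality is under-specified --- ``block structure plus Cauchy--Schwarz'' --- whereas the concrete argument (also the paper's) is that $\|X\|\le 1$ makes $\bigl(\begin{smallmatrix} I & X\\ X^* & I\end{smallmatrix}\bigr)$ positive in $M_{2m}(SOH(n))$, whence by CP-ness of $\kappa$ the block map $\bigl(\begin{smallmatrix}\Psi & \Phi\\ \Phi^* & \Psi\end{smallmatrix}\bigr)$ is CP with $\Psi$ unital and so $\|\Phi\|_{cb}\le 1$. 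Finally, your derivation of the factor $2$ is a genuinely different and rather clean alternative: the paper conjugates the Wittstock dilation by $\mathrm{diag}(-I,I)$ and averages to get a unital CP map, yielding $\|A_0\|\le 1$ and $\|\sum_{i\ge 1}A_i\otimes H_i\|\le 1$ separately and then uses the triangle inequality, whereas you bound the off-diagonal corner of the positive $2\times 2$ block by the geometric mean of the diagonals and bound each diagonal by $2\|\phi_j(I)\|$ via the sign-flip symmetry of Proposition~\ref{SOHpos}; both give exactly $2$, and your version isolates more clearly where the sign symmetry of $SOH(n)$ enters.
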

\begin{proof} First, we prove that $\kappa$ is completely positive. Keeping the notation from the last proof, assume that $\sum_{i=0}^n A_i \otimes H_i$ is positive.  We must prove that the map $\Phi: SOH(n) \to M_m$ given by $\Phi(X) = \sum_{i=0}^n A_i \otimes \delta_i(X)$ is completely positive. Assume that $A_0$ is invertible and define $B_i$ as above. Let $P= \sum_{i=0}^n P_i \otimes H_i \in M_q(SOH(n))^+.$ We must show that \[ \Phi^{(q)}(P) = \sum_{i=0}^n A_i \otimes P_i \in (M_n \otimes M_q)^+. \] 

Assuming that $P_0$ is also invertible, we set $Q_i = P_0^{-1/2}P_i P_0^{-1/2}.$ By the last Proposition, we have that
$\| \sum_{i=1}^n B_i \otimes e_i \| \le 1$ and $\|\sum_{i=1}^n Q_i \otimes e_i \| \le 1.$
Hence, by the self-duality of $OH(n),$ we have that $\| \sum_{i=1}^n B_i \otimes Q_i \|_{M_m \otimes M_q} \le 1.$ Using the fact that all these matrices are self-adjoint, yields
\[ -I_m \otimes I_q \le \sum_{i=1}^n B_i \otimes Q_i \le +I_m \otimes I_q. \]
Thus, $I_m \otimes I_q + \sum_{i=1}^n B_i \otimes Q_i \ge 0,$ which after conjugation
 by $A_0^{1/2} \otimes P_0^{1/2}$ yields that $\Phi^{(q)}(P) \ge 0.$

Conversely, if $\Phi = \sum_{i=0}^n A_i \otimes \delta_i \in M_m(SOH(n)^d)$ is completely positive, then it follows that $A_0 \ge 0,$ and that $A_i = A_i^*, 1 \le i \le n.$
Taking $B_i$'s as before, we have that $\Psi = I_m \otimes \delta_0 + \sum_{i=1}^n B_i \otimes \delta_i$ is a unital completely positive map and hence completely contractive.  Applying this map to any element $\sum_i 
C_i \otimes e_i \in M_q(OH(n))$ of norm less than one, yields that
$\| \sum_{i=1}^n B_i \otimes C_i \| \le 1.$ Thus, by self-duality of $OH(n)$ we have that $\| \sum_{i=1}^n B_i \otimes \overline{B_i} \| \le 1.$
Hence, $-I_m \otimes I_m \le \sum_{i=1}^n B_i \otimes \overline{B_i} \le + I_m \otimes I_m$ and the Proposition \ref{SOHpos} implies that $\sum_{i=0}^n A_i \otimes H_I$ is positive.

Thus, $\kappa$ is a complete order isomorphism.

We now consider the norm inequalities.  Let $X= \sum_{i=0}^n A_i \otimes H_i,$ set $\Phi = \sum_{i=0}^n A_i \otimes \delta_i$ and assume that $\|X \|_{SOH(n)} \le 1.$  Here, the matrices $A_i$ are no longer necessarily self-adjoint.
We then have that 
\[0 \le  \begin{pmatrix} I_{\cl H}\otimes I_m & X \\X^* & I_{\cl H}\otimes I_m \end{pmatrix} =
\begin{pmatrix} I_m & A_0\\ A_0^* & I_m \end{pmatrix} \otimes H_0 + \sum_{i=1}^n \begin{pmatrix} 0 & A_i\\ A_i^* & 0 \end{pmatrix} \otimes H_i. \]
From the fact that $\kappa$ is completely positive, it follows that
\[  \begin{pmatrix} I_m & A_0\\ A_0^* & I_m \end{pmatrix} \otimes \delta_0 + \sum_{i=1}^n \begin{pmatrix} 0 & A_i\\ A_i^* & 0 \end{pmatrix} \otimes \delta_i=
\begin{pmatrix} I_m \otimes \delta_o & \sum_{i=0}^n A_i \otimes \delta_i \\ \sum_{i=0}^n A_i^* \otimes \delta_i & I_m \otimes \delta_0 \end{pmatrix} = \begin{pmatrix} \Psi & \Phi\\ \Phi^* & \Psi \end{pmatrix}, \] and $\Psi$
is a unital completely positive map. Hence, $\|\Phi\|_{cb} \le 1$ and it follows that $\|\kappa^{(m)}(X)\|_{cb} \le \|X\|$ for any $X \in M_m(SOH(n))$ and any $m.$

Conversely, assume that $\Phi = \sum_{i=0} A_i \otimes \delta_i.$ To prove the other inequality, it will be enough to assume that $\|\Phi\|_{cb} \le 1$ and show that $\| X\|_{SOH(n)} \le 2.$

Since $\|\Phi||_{cb} \le 1,$ there exist unital completely positive maps $\Psi_j: SOH(n) \to M_m, j=1,2$ such that the map $\Gamma = \begin{pmatrix} \Psi_1 & \Phi \\ \Phi^* & \Psi_2 \end{pmatrix}: SOH(n) \to M_{2m}$ is completely positive.  Writing $\Psi_j = \sum_{i=0}^n C^j_i \otimes \delta_i,$ we have that
$\Gamma = \sum_{i=0} \begin{pmatrix} C^1_i & A_i \\A_i^* & C^2_i \end{pmatrix} \otimes \delta_i.$  Moreover, since the maps $\Psi_j$ are unital, $C^1_0 = C^2_0 = I_m.$
By the Proposition and the fact that $\kappa$ is a complete order isomorphism, we know that the fact that $\Gamma$ is completely positive implies that
$\Gamma_1 =\begin{pmatrix} I_m & A_0 \\ A_0^* & I_m \end{pmatrix} \otimes \delta_0 - \sum_{i=1}^n \begin{pmatrix} C^1_i & A_i \\ A_i^* & C^2_i \end{pmatrix} \otimes \delta_i$ is completely positive.  Adding $\Gamma + \Gamma_1,$ and using the positivity, yields that $\|A_0\| \le 1.$

Next, if we let $\Gamma_2$ be the completely positive map that we get by conjugating the coefficients of $\Gamma_1$ by the unitary $U= \begin{pmatrix} -I_m & 0 \\ 0 & I_m \end{pmatrix},$ we find that
$\Gamma_2 = \begin{pmatrix} I_m & -A_0 \\ -A_0^* & I_m \end{pmatrix} \otimes \delta_0 + \sum_{i=1}^n \begin{pmatrix} -C^1_i & A_i \\ A_i^* & -C^2_i \end{pmatrix} \otimes \delta_i.$
The average $1/2(\Gamma + \Gamma_2) = \begin{pmatrix} I_m & 0 \\0 & I_m \end{pmatrix} \otimes \delta_0 + \sum_{i=1}^n \begin{pmatrix} 0 & A_i \\ A_i^* & 0 \end{pmatrix} \otimes \delta_i $ is a unital completely positive map.

Using that $\kappa$ is a complete order isomorphism and replacing the $\delta_i$'s by $H_i$'s, yields that $\|\sum_{i=1}^n A_i \otimes H_i \| \le 1.$
Hence, 
\[\| \sum_{i=0}^n A_i \otimes H_i \| \le \| A_0 \otimes H_0 \| + \| \sum_{i=1}^n A_i \otimes H_i \| \le 2\]
 and the desired inequality follows.

Finally, we have that $\|\kappa\|_{cb} \le 1$ and $\|\kappa^{-1} \|_{cb} \le 2,$ so that $\|\kappa\| \cdot \|\kappa^{-1}\|_{cb} \le 2$ and so we must have equality by Proposition~\ref{nonci}.   
\end{proof}

\begin{remark} By the above results we see that, among all self-dual operator systems, the operator systems $SOH(n)$ acheive the minimal  cb-condition number of 2. However, this does not uniquely characterize these spaces. In fact, $M_2$ is another self-dual operator system that attains this minimum. One other example is $\ell^{\infty}_2,$ but it is not hard to see that this operator system is unitally, completely order isomorphic to $SOH(1).$ It would be interesting to try and characterize the self-dual operator systems that attain this minimal cb-condition number.
\end{remark}
\section{Some Structure Results of SOH(n)}

In \cite{Pi2}, $OH(n)$ is defined in a basis-free fashion. In this section we  show that $SOH(n)$ is also independent of basis, which leads to proving every quotient and operator subsystem of $SOH(n)$ is unitally completely order isomorphic to some $SOH(m)$. We also derive a few properties of $SOH(n)$ that will be useful in the later sections. To avoid ambiguity, whenever we work with $SOH(n)$ and $SOH(m)$, we denote $H_i^{(n)}$ and $H_j^{(m)}$, respectively, the basis elements $H_i$ as given in section 3. 

\begin{prop}\label{SOHcoi}
Let $1 \leq n \leq m$ and let $\{ \vec{u}_i  = (u_{ij}) \in \bb{R}^m \}_{i=1}^n$ be an orthonormal set. Then the map $\Phi \colon SOH(n) \to SOH(m)$ defined by $\Phi(I) = I$ and $\Phi(H_i^{(n)}) := \sum_{j=1}^m u_{ij} H_j^{(m)}$ is a complete order inclusion. 
\end{prop}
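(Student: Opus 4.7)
The plan is to reduce everything to Proposition~\ref{SOHpos} and exploit the fact that the entries $u_{ij}$ are \emph{real} and the vectors $\vec u_i$ are orthonormal.

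First I would note that $\Phi$ is a well-defined linear map, and check it is injective. If $\Phi\bigl(A_0\otimes H_0^{(n)}+\sum_{i=1}^n A_i\otimes H_i^{(n)}\bigr)=0$, then after expanding,
\[
A_0\otimes H_0^{(m)}+\sum_{j=1}^m \tilde A_j\otimes H_j^{(m)}=0,\qquad\text{where } \tilde A_j:=\sum_{i=1}^n u_{ij}A_i.
\]
Since $H_0^{(m)},\dots,H_m^{(m)}$ are linearly independent, $A_0=0$ and $\tilde A_j=0$ for every $j$. The matrix $U=(u_{ij})$ is $n\times m$ with $UU^T=I_n$, so $U^T$ has trivial kernel on the column vector $(A_1,\ldots,A_n)^T$, forcing each $A_i=0$. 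So $\Phi$ is a unital injection.

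Next I would establish the order part. Take $P=A_0\otimes H_0^{(n)}+\sum_{i=1}^n A_i\otimes H_i^{(n)}\in M_q(SOH(n))$ and compute
\[
\Phi^{(q)}(P)=A_0\otimes H_0^{(m)}+\sum_{j=1}^m \tilde A_j\otimes H_j^{(m)}.
\]
By Proposition~\ref{SOHpos}, $P\ge 0$ iff $A_0\ge 0$, $A_i=A_i^*$ for all $i$, and
\[
-A_0\otimes \overline{A_0}\;\le\;\sum_{i=1}^n A_i\otimes\overline{A_i}\;\le\;A_0\otimes \overline{A_0},
\]
while $\Phi^{(q)}(P)\ge 0$ iff $A_0\ge 0$, $\tilde A_j=\tilde A_j^*$ for all $j$, and
\[
-A_0\otimes \overline{A_0}\;\le\;\sum_{j=1}^m \tilde A_j\otimes\overline{\tilde A_j}\;\le\;A_0\otimes \overline{A_0}.
\]
Because the $u_{ij}$ are real, $A_i=A_i^*$ for all $i$ gives $\tilde A_j=\tilde A_j^*$ for all $j$; conversely, $\tilde A_j=\tilde A_j^*$ for all $j$ means $U^T(A_1-A_1^*,\ldots,A_n-A_n^*)^T=0$, and applying $U$ with $UU^T=I_n$ recovers $A_i=A_i^*$.

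The crux is then a single calculation: using $\overline{\tilde A_j}=\sum_i u_{ij}\overline{A_i}$ and orthonormality $\sum_j u_{ij}u_{kj}=\delta_{ik}$,
\[
\sum_{j=1}^m \tilde A_j\otimes\overline{\tilde A_j}
=\sum_{i,k=1}^n\Bigl(\sum_{j=1}^m u_{ij}u_{kj}\Bigr) A_i\otimes\overline{A_k}
=\sum_{i=1}^n A_i\otimes\overline{A_i}.
\]
Thus the two sandwich inequalities are \emph{identical}, and the positivity characterizations for $P$ and $\Phi^{(q)}(P)$ coincide. This simultaneously shows $\Phi^{(q)}$ is positive and that $P\ge 0$ whenever $\Phi^{(q)}(P)\ge 0$, i.e.\ $\Phi$ is a complete order inclusion.

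I do not expect a serious obstacle here; the whole argument rests on the real-orthonormal identity above. The only thing one must be careful about is that $\overline{\tilde A_j}$ is a linear combination of the $\overline{A_i}$ with the \emph{same} real coefficients $u_{ij}$, which is exactly what lets the cross terms collapse via $\vec u_i\cdot\vec u_k=\delta_{ik}$.
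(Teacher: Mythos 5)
Your argument is correct and is essentially the same as the paper's: both reduce to Proposition~\ref{SOHpos} by setting $\tilde A_j=\sum_i u_{ij}A_i$ and observing that real orthonormality gives $\sum_j \tilde A_j\otimes\overline{\tilde A_j}=\sum_i A_i\otimes\overline{A_i}$, so the positivity criteria coincide. Your extra checks (injectivity, and recovering $A_i=A_i^*$ from $\tilde A_j=\tilde A_j^*$ via $UU^T=I_n$) are minor details the paper leaves implicit.
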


\begin{proof}
Consider $\sum_{i=0}^n A_i \otimes H_i^{(n)} \in M_p \otimes SOH(n)$. Let $B_0 = A_0$ and for $ j = 1, \dots, n$, let $B_j = \sum_{i=1}^n u_{ij} A_i$. Then $\sum_{i=0}^n A_i \otimes  \Phi(H_i)$ is 
	\begin{equation*}
			B_0 \otimes H_0^{(m)} + \sum_{i=1}^n A_i \otimes ( \sum_{j=1}^m u_{ij} H_j^{(m)} )	=	B_0 \otimes I + \sum_{j=1}^m B_j \otimes H_j^{(m)}	.
	\end{equation*}
It is obvious that $B_j = B_j^*$; and by orthonormality of the $\vec{u}_i$'s, 
	\begin{align*}
		\sum_{j=1}^m B_j \otimes \overline{B_j} 	&=	\sum_{j=1}^m \left(	\sum_{i, k=1}^n u_{ij} u_{kj} \right)  A_i \otimes \overline{A_k} 	=	\sum_{i, k=1}^n \left(\sum_{j=1}^m u_{ij} u_{kj} \right) A_i \otimes \overline{A_k} \\
							&= \sum_{i, k =1}^n \delta_{i,k} A_i \otimes \overline{A_k}	=	\sum_{i=1}^n A_i \otimes \overline{A_i}.
	\end{align*}
Therefore, $\{A_i\}_{i=0}^n$ satisfies the third condition in Proposition \ref{SOHpos} if and only if $\{B_j\}_{j=0}^m $ satisfies the same condition, proving that $\sum_{i=0}^n A_i \otimes H_i^{(n)} \geq 0$ if and only if $\sum_{i=0}^n A_i \otimes \Phi(H_i^{(n)}) \geq 0$; this  is equivalent to $\Phi$ being a unital complete order inclusion. 
\end{proof}

\begin{cor}
Let $U = [u_{ij}] \in M_n(\bb{R})$ be an orthonormal matrix and set $K_0 = H_0$, $K_i = \sum_{j=1}^n u_{ij} H_j$. Then the map $\Phi \colon SOH(n) \to SOH(n)$ given by $\Phi(H_0) = K_0$ and $\Phi(H_i) = K_i$ is a unital complete order isomorphism. 
\end{cor}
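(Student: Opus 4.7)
The plan is to deduce this corollary directly from Proposition~\ref{SOHcoi} by specializing $m = n$ and using the rows of $U$ as the orthonormal set. Since $U$ is an orthogonal matrix in $M_n(\bb R)$, its rows $\vec u_i = (u_{i1},\dots,u_{in})$ form an orthonormal set in $\bb R^n$. Plugging these into the proposition produces precisely the map $\Phi$ described in the statement: $\Phi(H_0^{(n)}) = H_0^{(n)} = K_0$ and $\Phi(H_i^{(n)}) = \sum_{j=1}^n u_{ij} H_j^{(n)} = K_i$ for $1 \le i \le n$. The proposition then immediately tells us that $\Phi$ is a unital complete order inclusion.

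The only remaining point is to upgrade ``inclusion'' to ``isomorphism,'' i.e.~to verify surjectivity. Because $U$ is orthogonal it is invertible, so the $K_i$'s are an $\bb R$-linear basis transformation of the $H_i$'s ($1 \le i \le n$); together with $K_0 = H_0$ they span the same $(n+1)$-dimensional space $SOH(n)$. Hence $\Phi$ maps a basis to a basis and is a linear bijection of $SOH(n)$ onto itself. Combined with the complete order inclusion property this gives that $\Phi$ is a unital complete order isomorphism, with inverse obtained by applying the same construction to $U^{-1} = U^t$ (which is also orthogonal).

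There is no real obstacle here; the corollary is essentially a dimension-counting observation once Proposition~\ref{SOHcoi} is in place. The substantive content—that an orthogonal change of variables on the ``non-identity'' generators preserves the matrix order—is exactly what the proposition already establishes via the identity $\sum_{j=1}^n B_j \otimes \overline{B_j} = \sum_{i=1}^n A_i \otimes \overline{A_i}$ under the orthonormality of the $\vec u_i$'s.
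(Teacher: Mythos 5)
Your proposal is correct and follows the same route the paper intends: the corollary is stated as an immediate consequence of Proposition~\ref{SOHcoi} (with $m=n$ and the rows of $U$ as the orthonormal set), and your addition of the surjectivity argument via invertibility of $U$ is exactly the small detail needed to upgrade the complete order inclusion to an isomorphism.
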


Given $n \leq m$, it is now clear that $SOH(n) \subset_{ucoi} SOH(m)$. We will see that every operator subsystem of $SOH(m)$ is necessarily $SOH(n)$. 

\begin{cor}
If $\cl{T}$ is a operator subsystem of $SOH(m)$ of dimension $n+1$, then $\cl{T}$ is unitally completely order isomorphic to $SOH(n)$.
\end{cor}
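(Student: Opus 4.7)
The plan is to produce an orthonormal set in $\mathbb{R}^m$ whose associated Proposition~\ref{SOHcoi} inclusion has image exactly $\cl T$. Since $\cl T$ is a $*$-closed complex subspace of $SOH(m)$ containing $I$, its self-adjoint part $\cl T_h$ is a real vector space of real dimension $n+1$, so we may choose a real basis $I, X_1, \dots, X_n$ with each $X_i = X_i^*$. Each self-adjoint element of $SOH(m)$ has a unique expansion with real coefficients in the real basis $H_0, H_1, \dots, H_m$ of $SOH(m)_h$, so we may write $X_i = c_i I + \sum_{j=1}^m u_{ij} H_j$ with $c_i, u_{ij} \in \mathbb{R}$. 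Replacing $X_i$ by $X_i - c_i I$, which still lies in $\cl T_h$, I may assume $X_i = \sum_{j=1}^m u_{ij} H_j$.

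Next I observe that the vectors $\vec{u}_i = (u_{ij})_{j=1}^m \in \mathbb{R}^m$ are linearly independent over $\mathbb{R}$: otherwise the corresponding $X_i$'s would be linearly dependent, contradicting that $I, X_1, \dots, X_n$ is a basis of $\cl T$. I then apply the real Gram--Schmidt procedure to $\vec{u}_1, \dots, \vec{u}_n$ to produce an orthonormal set $\vec{v}_1, \dots, \vec{v}_n \in \mathbb{R}^m$ with the same real linear span. Setting $Y_i = \sum_{j=1}^m v_{ij} H_j$, each $Y_i$ is a real linear combination of $X_1, \dots, X_n$, hence $Y_i \in \cl T_h$, and conversely each $X_i$ is a real linear combination of $Y_1, \dots, Y_n$. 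Therefore $\cl T$ is the complex linear span of $I, Y_1, \dots, Y_n$.

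Finally, Proposition~\ref{SOHcoi} applied to the orthonormal set $\{\vec{v}_i\}_{i=1}^n$ yields a unital complete order inclusion $\Phi \colon SOH(n) \to SOH(m)$ sending $H_0^{(n)}$ to $I$ and $H_i^{(n)}$ to $Y_i$. The image of $\Phi$ is precisely $\cl T$, so $\Phi$ restricts to a unital complete order isomorphism from $SOH(n)$ onto $\cl T$.

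The only delicate point is the insistence that the coefficients $u_{ij}$ be real so that real Gram--Schmidt stays inside the self-adjoint part of $\cl T$; this is guaranteed by the fact that $H_0, H_1, \dots, H_m$ is a real basis of $SOH(m)_h$. Everything else is routine linear algebra together with the basis-change statement already established in Proposition~\ref{SOHcoi}.
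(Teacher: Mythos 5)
Your proof is correct and follows essentially the same route as the paper: extract a self-adjoint basis of $\cl T$ with real coefficient vectors in $\mathbb{R}^m$, orthonormalize, and invoke Proposition~\ref{SOHcoi} to realize $\cl T$ as the image of a unital complete order inclusion of $SOH(n)$. You spell out a few details the paper leaves implicit (why the coefficients can be taken real, the subtraction of the $I$-component, and Gram--Schmidt in place of ``pick an orthonormal basis''), but the argument is the same.
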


\begin{proof}
Let $\{K_0 = I, K_i = K_i^* \colon i =1, \dots, n \}$ be a basis for $\cl{T}$. Without loss of generality, we assume for each $i =1, \dots, n$, $K_i = \sum_{j=1}^m a_{ij} H_j^{(m)}$ for some $a_{ij} \in \bb{R}$. We first claim that the vectors $\vec{a}_i = (a_{ij}) \in \bb{R}^m$ are linearly independent. For if not, then $\vec{a}_i = \sum_{k=1, k \neq i}^n \lambda_k \vec{a}_k$, for some $i$, leading to $K_i = \sum_{j=1}^m \sum_{k=1, k \neq i}^n \lambda_k H_j^{(m)}$, which contradicts our assumption. 

Now consider the $n$-dimensional subspace of $\bb{R}^m$ spanned these $\vec{a_i}$'s. Pick an orthonormal basis $\{\vec{u}_i = (u_{ij}) \in \bb{R}^m\}_{i=1}^n$ for this subspace and define $\Phi \colon SOH(n) \to SOH(m)$  by $\Phi(I) =I$ and $\Phi(H_i^{(n)}) = \sum_{j=1}^m u_{ij} H_j^{(m)}$.  By the last proposition, $\Phi$ is a complete order inclusion. It remains to check that the image of $\Phi$ is $\cl{T}$. Since every $\vec{a_i} = \sum_{k=1}^n \lambda_k^{i} \vec{u}_k$, for each $K_i$ we can write 
	\begin{equation*}
		K_i = \sum_{j=1}^m a_{ij} H_j^{(m)} = \sum_{j=1}^m \sum_{k=1}^n \lambda_k^{i} u_{kj} H_j^{(m)} = \sum_{k=1}^n \lambda_k^{i} \Phi( H_j^{(n)} ),
	\end{equation*}
proving that $\Phi( SOH(n) ) = \cl{T}$. Consequently $\cl{T} \cong_{ucoi} SOH(n)$ via $\Phi$.
\end{proof}

Hence every operator subsystem of $SOH(n)$ is again of the same form. The next result characterizes quotients of $SOH(n)$ based on self-duality. 

\begin{prop}
Let $\cl{J}$ be a non-trivial self-adjoint subspace of $SOH(n)$. Then the following are equivalent:
	\begin{enumerate}
		\item 	$\cl J$ is the kernel of some unital, completely positive map with domain $SOH(n)$.
		\item 	There exist $m < n$ and a surjective unital completely positive map $\phi \colon SOH(n) \to SOH(m)$ such that $\cl J = \ker (\phi )$.
		\item 	There is unital completely positive map $\phi$ on $SOH(n)$ for which $\cl J = \ker(\phi)$.
	\end{enumerate}
\end{prop}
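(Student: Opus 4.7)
My plan is to prove the cycle $(2) \Rightarrow (3) \Rightarrow (1) \Rightarrow (2)$. The implication $(3) \Rightarrow (1)$ is immediate. For $(2) \Rightarrow (3)$, I take a surjective UCP map $\psi \colon SOH(n) \to SOH(m)$ with kernel $\cl J$ and compose with a unital complete order inclusion $\iota \colon SOH(m) \hookrightarrow SOH(n)$ furnished by Proposition \ref{SOHcoi} (applied to any orthonormal set $\{\vec u_1, \dots, \vec u_m\}$ in $\bb{R}^n$); then $\iota \circ \psi$ is UCP on $SOH(n)$ with kernel $\cl J$.

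The substance is in $(1) \Rightarrow (2)$. Let $\phi \colon SOH(n) \to \cl S$ be UCP with $\ker \phi = \cl J$, and set $k = \dim \cl J$. Unitality of $\phi$ gives $I \notin \cl J$, so the projection onto $\text{span}\{H_1, \dots, H_n\}$ along $I$ is injective on $\cl J$. I will choose an orthonormal basis $\{\vec v_1, \dots, \vec v_n\}$ of $\bb{R}^n$ whose first $k$ vectors span this projection, and apply the corollary to Proposition \ref{SOHcoi} to pass to the basis $K_0 = I$, $K_i = \sum_j v_{ij} H_j$. In these coordinates $\cl J$ takes the form $\text{span}\{K_i - c_i I : 1 \le i \le k\}$ for real scalars $c_i$. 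A further rotation of $K_1, \dots, K_k$ (again by the same corollary) aligns $(c_1, \dots, c_k)$ along the first axis, reducing $\cl J$ to $\text{span}\{K_1 - cI, K_2, K_3, \dots, K_k\}$ with $c := \|(c_1, \dots, c_k)\| \ge 0$.

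Next I will bound $c$ and handle the degenerate case. Since $\pm K_1 \le I$ in $SOH(n)$ (immediate from Proposition \ref{SOHpos} at scalar level) and $\phi$ is positive, $\pm c I_{\cl S} \le I_{\cl S}$, whence $c \le 1$. If $c = 1$, then for any unit vector $\xi$ in an ambient Hilbert space for $\cl S$ the functional $\omega_\xi \circ \phi$ is a state on $SOH(n)$ sending $K_1$ to $1$ and $K_2, \dots, K_k$ to $0$; via Theorem \ref{SOHselfdual} this state corresponds under $\kappa^{-1}$ to a positive element of $SOH(n)$, and the scalar case of Proposition \ref{SOHpos} then forces its value on $K_{k+1}, \dots, K_n$ to be zero. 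Polarizing over $\xi$ yields $\phi(K_j) = 0$ for every $j > k$, so $k = n$.

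With $m = n - k$ I define $\psi \colon SOH(n) \to SOH(m)$ by $\psi(I) = I$, $\psi(K_1) = cI$, $\psi(K_i) = 0$ for $2 \le i \le k$, and $\psi(K_{k+j}) = \sqrt{1 - c^2}\, H_j^{(m)}$ for $1 \le j \le m$. The kernel equation and surjectivity (when $c < 1$; the case $c = 1$, $m = 0$ collapses to a state) follow by dimension counting. The main obstacle is complete positivity: given $A_0, \dots, A_n \in M_p$ with $\sum A_i \otimes K_i \ge 0$, Proposition \ref{SOHpos} for $SOH(m)$ reduces the task to the inequality
\[
(A_0 + cA_1) \otimes \overline{(A_0 + cA_1)} \ge (1 - c^2) \sum_{j > k} A_j \otimes \overline{A_j},
\]
which I will derive from the algebraic identity
\begin{multline*}
(A_0 + cA_1) \otimes \overline{(A_0 + cA_1)} - (1 - c^2) \sum_{j > k} A_j \otimes \overline{A_j} = (cA_0 + A_1) \otimes \overline{(cA_0 + A_1)} \\
{}+ (1 - c^2)\Bigl[A_0 \otimes \overline{A_0} - A_1 \otimes \overline{A_1} - \sum_{j > k} A_j \otimes \overline{A_j}\Bigr];
\end{multline*}
the first summand is of the form $X \otimes \overline{X}$ and is positive, while the bracket is at least $\sum_{i=2}^k A_i \otimes \overline{A_i} \ge 0$ by Proposition \ref{SOHpos} applied to the hypothesis $\sum A_i \otimes K_i \ge 0$. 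Together with $A_0 + cA_1 \ge 0$ (from $\pm A_1 \le A_0$ and $c \le 1$), this delivers the required positivity.
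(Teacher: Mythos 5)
Your normal form for $\cl J$ --- rotating coordinates via the Corollary to Proposition \ref{SOHcoi} to get $\cl J=\operatorname{span}\{K_1-cI,K_2,\dots,K_k\}$ with $0\le c\le 1$ --- and your handling of the degenerate case $c=1$ are fine, and the overall strategy (build an explicit surjective ucp map onto $SOH(m)$) is a genuinely different and more constructive route than the paper's, which instead dualizes the quotient map, uses that $q^d\colon (SOH(n)/\cl J)^d\to SOH(n)^d\cong SOH(n)$ is a complete order embedding, and then invokes the classification of subsystems together with self-duality. However, your verification that $\psi$ is completely positive --- the entire content of $(1)\Rightarrow(2)$ on your route --- rests on the assertion that $X\otimes\overline{X}\ge 0$ for self-adjoint $X$, and this is false. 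For self-adjoint $X$ one has $\overline{X}=X^t$, so the eigenvalues of $X\otimes\overline{X}$ are the products $\lambda_i\lambda_j$ of eigenvalues of $X$: already $X=\operatorname{diag}(1,-1)$ gives $X\otimes\overline{X}=\operatorname{diag}(1,-1,-1,1)$. You use this false fact twice: to claim that $(cA_0+A_1)\otimes\overline{(cA_0+A_1)}\ge 0$ (the matrix $cA_0+A_1$ is self-adjoint but generally indefinite --- take $c=0$, $A_0=I_2$, $A_1=\tfrac1{\sqrt2}\operatorname{diag}(1,-1)$, all other $A_i=0$, which is a positive element of $M_2(SOH(n))$ by Proposition \ref{SOHpos}), and to claim that $\sum_{i=2}^kA_i\otimes\overline{A_i}\ge 0$, which fails for the same reason. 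So your algebraic identity, while correct, does not deliver the required inequality. A further gap: Proposition \ref{SOHpos} demands the two-sided estimate $-(A_0+cA_1)\otimes\overline{(A_0+cA_1)}\le(1-c^2)\sum_{j>k}A_j\otimes\overline{A_j}\le(A_0+cA_1)\otimes\overline{(A_0+cA_1)}$, and you only address the upper bound; the lower bound is not automatic, again because the middle term need not be positive.

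The map $\psi$ you write down is in fact completely positive, but a different mechanism is needed. For example, annihilating $K_2,\dots,K_k$ can be achieved by averaging the $2^{k-1}$ unital complete order automorphisms $K_i\mapsto\pm K_i$ supplied by the Corollary to Proposition \ref{SOHcoi}, which is manifestly ucp; the remaining step $K_1\mapsto cI$, $K_{k+j}\mapsto\sqrt{1-c^2}\,H_j^{(m)}$ still requires a real argument. Alternatively, follow the paper: a kernel of a ucp map is the kernel of a complete quotient map, the dual of the quotient embeds as an operator subsystem of $SOH(n)^d\cong SOH(n)$, hence is some $SOH(m)$ by the subsystem classification, and dualizing back identifies $SOH(n)/\cl J$ with $SOH(m)$.
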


\begin{proof}
The direction $(2) \Longrightarrow (3) \Longrightarrow (1)$ is obvious. Now assume (1) and let $q \colon SOH(n) \to SOH(n)/\cl  J$ be the canonical quotient map. Then $q^d \colon ( SOH(n)/\cl J )^d \to SOH(n)^d = SOH(n)$ is a unital complete order embedding \cite{FP}. Since $\cl J$ is non-trivial, $(SOH(n)/\cl J)^d$ has dimension $m < n$ and by the last corollary $(SOH(n)/\cl J)^d \cong SOH(m)$. By duality, $SOH(n)/ \cl J \cong SOH(m)^d = SOH(m)$. 
\end{proof}

In \cite[Section 8]{Ka}, it is shown that the coproduct of two operator systems $\cl{S}$ and $\cl{T}$ can be obtained by operator system quotients. Namely, $\cl{S} \oplus_1 \cl{T} \cong_{ucoi} ( \cl{S} \oplus \cl{T} ) / \cl J$, where $\cl J = \bb{C}(1_{\cl{S}}, -1_{\cl{T}})$. 

\begin{prop}
For any $p \in \bb{N}$, let $H_0^{(p)},...,H_p^{(p)}$ denote the canonical basis for $SOH(p).$  Then for any $n,m \in \bb N$, the map $\phi: SOH(n) \oplus SOH(m) \to SOH(n + m )$ defined by $\phi(H_j^{(n)}) = H_j^{(n+m)}, 0 \le j \le n$ and $\phi(H_j^{(m)}) = \begin{cases} H_0^{(n+m)}, & j=0\\ H_{n+j}^{(n+m)}, & j > 0 \end{cases}$ induces a unital completely positive map $\Phi: SOH(n) \oplus_1 SOH(m) \to SOH(n+m),$ but this map is not an order isomorphism.
\end{prop}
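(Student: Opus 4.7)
The plan is to obtain $\Phi$ from the universal property of the coproduct and then exhibit a positive element of $SOH(n+m)$ whose preimage in $SOH(n)\oplus_1 SOH(m)$ fails to be positive.

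First, I would apply Proposition~\ref{SOHcoi} twice: the orthonormal set $\{e_1,\dots,e_n\}\subset\bb R^{n+m}$ yields a unital complete order inclusion $\iota\colon SOH(n)\to SOH(n+m)$ with $\iota(H_j^{(n)})=H_j^{(n+m)}$, and $\{e_{n+1},\dots,e_{n+m}\}\subset\bb R^{n+m}$ yields a unital complete order inclusion $\iota'\colon SOH(m)\to SOH(n+m)$ with $\iota'(H_0^{(m)})=H_0^{(n+m)}$ and $\iota'(H_j^{(m)})=H_{n+j}^{(n+m)}$ for $j\ge 1$. Both maps are unital completely positive, so the universal property of the coproduct produces a unique unital completely positive map $\Phi\colon SOH(n)\oplus_1 SOH(m)\to SOH(n+m)$ whose restrictions along the canonical inclusions are $\iota$ and $\iota'$; a check on basis elements shows that this $\Phi$ coincides with the map induced by $\phi$.

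To prove $\Phi$ is not an order isomorphism, I would test with
\[ X = \sqrt{2}\, H_0^{(n+m)} + H_1^{(n+m)} + H_{n+1}^{(n+m)}, \]
which lies in $SOH(n+m)^+$ by Proposition~\ref{SOHpos} since $1\cdot\overline 1+1\cdot\overline 1=2=(\sqrt 2)^2$. Matching coefficients forces every preimage $(S,T)\in SOH(n)\oplus SOH(m)$ of $X$ under $\phi$ to have the form $S=s_0 H_0^{(n)}+H_1^{(n)}$, $T=t_0 H_0^{(m)}+H_1^{(m)}$ with $s_0+t_0=\sqrt 2$. I then want to show that $[(S,T)]$ is not positive in the coproduct. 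Since $\cl J=\bb C(1_{SOH(n)},-1_{SOH(m)})$, the quotient description of positivity requires that for every $\epsilon>0$ there be some $\lambda\in\bb R$ with $S+(\lambda+\epsilon)\,1_{SOH(n)}\ge 0$ and $T+(-\lambda+\epsilon)\,1_{SOH(m)}\ge 0$; by Proposition~\ref{SOHpos} these reduce to $s_0+\lambda+\epsilon\ge 1$ and $t_0-\lambda+\epsilon\ge 1$, forcing $s_0+t_0+2\epsilon\ge 2$. With $s_0+t_0=\sqrt 2$ this demands $\epsilon\ge (2-\sqrt 2)/2$, which fails for small $\epsilon$, so $[(S,T)]\not\ge 0$ while $\Phi([(S,T)])=X\ge 0$, proving $\Phi^{-1}$ is not positive. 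The only delicate point is correctly tracking positivity through the operator system quotient, but because the failure occurs by the strict, dimension-free margin $2-\sqrt 2$, the $\epsilon$-closure in the definition of quotient positivity is absorbed automatically and no matrix-level argument is needed.
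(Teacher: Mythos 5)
Your proof is correct and follows essentially the same route as the paper: the paper likewise obtains $\Phi$ from the universal property of the coproduct and tests the element $\sqrt{2}\,H_0+H_1+H_2$ (after reducing to the case $n=m=1$), deriving the same contradiction between $a+b=\sqrt{2}$ and $a,b\ge 1$ via Proposition~\ref{SOHpos}. Your handling of positivity in the quotient, with the explicit $\epsilon$-margin $2-\sqrt{2}$, is if anything slightly more careful than the paper's decomposition into sums of positives from the two summands.
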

\begin{proof} It is easily checked that the restriction of $\phi$ to each direct summand is a unital completely positive map. Hence, $\Phi$ is a unital completely positive map by the universal properties of the coproduct.

To see that $\Phi$ is not an order isomorphism,
it suffices to show that $SOH(1) \oplus_1 SOH(1) \neq SOH(2)$. Suppose the contrary and consider the positive element $P = \sqrt{2} H_0^{(2)} + H_1^{(2)} + H_2^{(2)}$ in $SOH(2)$. Then there must be positive numbers $a$ and $b$ such that $(a H_0^{(1)} + H_1^{(1)})$ and $(b H_0^{(1)} + H_1^{(1)})$ are positive in $SOH(1)$ and sum to $P$ in $SOH(2)$. By Proposition \ref{SOHpos}, each of  $a^2$ and $b^2$ is greater than $1$; however $a + b = \sqrt{2}$ implies that $2ab \leq 0 $, contradicting $a$ and $b$ are positive. 
\end{proof}

\begin{remark} In an earlier version of this paper, we erroneously claimed that $\Phi$ was a complete order isomorphism. We would like to thank Ali S. Kavruk for pointing out this error.
\end{remark}
\begin{prop}
Let $\cl{S}$ be an operator system and $\{h_i \colon h_i = h_i^*, ||h_i|| \leq 1 \}_{i=1}^n \subset \cl{S}$. Then there is $r > 0$ such that the map $\phi \colon SOH(n) \to \cl{S}$ given by $H_0 \mapsto r 1_{\cl{S}}$, $H_i \mapsto h_i$ is completely positive. 
\end{prop}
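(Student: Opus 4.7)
The plan is to realize $\cl{S}$ concretely as a unital subsystem of some $B(\cl K)$, and then use Proposition~\ref{SOHpos} to reduce the complete positivity of $\phi$ to a concrete operator-norm inequality. By Proposition~\ref{SOHpos}, showing $\phi$ is completely positive amounts to showing that for every $m$, every $A_0 \in M_m^+$, and every self-adjoint $A_1,\dots,A_n \in M_m$ satisfying $-A_0 \otimes \overline{A_0} \le \sum_{i=1}^n A_i \otimes \overline{A_i} \le A_0 \otimes \overline{A_0}$, the element $r A_0 \otimes 1_{\cl S} + \sum_{i=1}^n A_i \otimes h_i$ is positive in $M_m(B(\cl K))$.

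Next, I would apply the standard perturbation trick used in the proof of Proposition~\ref{SOHpos}: replace $A_0$ by $A_0 + \varepsilon I_m$, prove the statement under the assumption that $A_0$ is invertible, and then let $\varepsilon \to 0$. With $A_0$ invertible, set $B_i = A_0^{-1/2} A_i A_0^{-1/2}$ for $1 \le i \le n$. The hypothesis becomes $\|\sum_{i=1}^n B_i \otimes \overline{B_i}\| \le 1$, and after conjugating by $A_0^{1/2} \otimes I_{\cl K}$ the target positivity reduces to
\[
    r I_m \otimes I_{\cl K} + \sum_{i=1}^n B_i \otimes h_i \;\ge\; 0 .
\]
Since the $B_i$ and $h_i$ are self-adjoint, $\sum_i B_i \otimes h_i$ is self-adjoint, so it suffices to find $r$ (independent of $m$) with $\|\sum_i B_i \otimes h_i\| \le r$.

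The key estimate is that $\|\sum_{i=1}^n B_i \otimes \overline{B_i}\| \le 1$ already forces $\|B_j\| \le 1$ for each $j$. To see this, fix $j$, choose a unit eigenvector $\xi$ of $B_j$ with $B_j\xi = \lambda \xi$, $|\lambda| = \|B_j\|$, and evaluate the quadratic form of the self-adjoint operator $\sum_i B_i \otimes \overline{B_i}$ at the unit vector $\xi \otimes \bar\xi$: using $\langle \overline{B_i}\bar\xi,\bar\xi\rangle = \overline{\langle B_i\xi,\xi\rangle} = \langle B_i\xi,\xi\rangle$ (because each $B_i$ is self-adjoint), one obtains $\sum_i \langle B_i\xi,\xi\rangle^2 \le 1$, whence $\lambda^2 \le 1$. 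Consequently
\[
    \Bigl\|\sum_{i=1}^n B_i \otimes h_i \Bigr\| \;\le\; \sum_{i=1}^n \|B_i\| \,\|h_i\| \;\le\; n,
\]
so $r = n$ works; any larger value works too.

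The only real obstacle is the uniformity of $r$ in $m$: one must avoid an estimate whose constant depends on the size of the matrices $A_i$. The extraction of the pointwise bound $\|B_j\| \le 1$ from the global condition $\|\sum B_i \otimes \overline{B_i}\| \le 1$ is exactly what makes the bound dimension-free, and everything else is an application of Proposition~\ref{SOHpos} combined with a conjugation by $A_0^{1/2}\otimes I$.
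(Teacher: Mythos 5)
Your argument is correct, and it follows the paper's reduction exactly up to the last step: both proofs invoke Proposition~\ref{SOHpos}, pass to $B_i = A_0^{-1/2}A_iA_0^{-1/2}$ via the invertibility/perturbation trick, and reduce everything to a dimension-free bound on $\bigl\|\sum_{i=1}^n B_i \otimes h_i\bigr\|$ under the hypothesis $\bigl\|\sum_{i=1}^n B_i \otimes \overline{B_i}\bigr\| \le 1$. Where you diverge is in how that bound is obtained. The paper applies Haagerup's Cauchy--Schwarz inequality, $\bigl\|\sum_i B_i \otimes h_i\bigr\| \le \bigl\|\sum_i B_i \otimes \overline{B_i}\bigr\|^{1/2}\bigl\|\sum_i h_i \otimes \overline{h_i}\bigr\|^{1/2} \le n^{1/2}$, so any $r > n^{1/2}$ works. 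You instead extract the pointwise bound $\|B_j\| \le 1$ by testing the quadratic form of $\sum_i B_i \otimes \overline{B_i}$ at $\xi \otimes \bar\xi$ for an eigenvector $\xi$ of $B_j$ (a correct computation, since self-adjointness makes each term $\langle B_i\xi,\xi\rangle^2 \ge 0$), and then use the triangle inequality to get the constant $n$. Your route is more elementary --- it avoids Haagerup's lemma entirely and replaces the paper's $2\times 2$ operator-matrix positivity criterion with the direct observation that $\sum_i B_i \otimes h_i$ is self-adjoint --- at the cost of a worse constant ($r = n$ versus $r > n^{1/2}$). Since the proposition only asserts the existence of some $r > 0$, and the later applications (e.g.\ the Archimedean order unit argument for $\gamma_{soh}$) use only existence, the weaker constant is harmless.
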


\begin{proof}
Choose $r > n^{1/2}$ and suppose $A_0 \otimes H_0 + \sum_{i=1}^n A_i \otimes H_i$ is positive in $M_m \otimes SOH(n)$. We will show that $r A_0 \otimes 1_{\cl{S}} + \sum_{i=1}^n A_i \otimes h_i$ is positive. First assume $A_0 > 0$ is invertible. We claim 
	\begin{equation*}
		\begin{bmatrix}
		r A_0 \otimes 1_{\cl{S}}	&	\sum_{i=1}^n A_i \otimes h_i	\\
		\sum_{i=1}^n A_i^* \otimes h_i^*	&	r A_0 \otimes 1_{\cl{S}}
		\end{bmatrix}
	\end{equation*}
is positive in $M_{2m} \otimes SOH(n)$, which is equivalent to
	\begin{equation*}
		r^{-1} || \sum_{i=1}^n A_0^{-1/2} A_i A_0^{-1/2} \otimes h_i ||_{M_m \otimes \cl{S}} \leq 1. 
	\end{equation*}
Write $B_i = A_0^{-1/2} A_i A_0^{-1/2}$, then by Proposition \ref{SOHpos}, $|| \sum_{i=1}^n B_i \otimes \overline{B_i} || \leq 1$. Now embed $\cl{S} \subset B( \cl{H} )$ and regard $h_i \otimes \overline{h_i}$ as an operator in $B( \cl{H} \otimes \overline{H} )$. Then by a version of Cauchy-Schwarz inequality due to Haagerup \cite[Lemma 2.4]{Ha},   
	\begin{align*}
			r^{-1} || \sum_{i=1}^n B_i \otimes h_i ||_{M_m \otimes \cl{S}} 	&\leq 	r^{-1} || \sum_{i=1}^n B_i \otimes \overline{B_i} ||^{1/2}_{M_{2m}} \cdot || \sum_{i=1}^n h_i \otimes \overline{h_i} ||^{1/2}_{B(\cl{H} \otimes \overline{\cl{H} })}	\\
																			& \leq r^{-1} n^{1/2} < 1.
	\end{align*}
Hence, the above matrix is positive as claimed. Pre and post multiplying it by $[1, 1]$ shows that $2 (r A_0 \otimes 1_{\cl{S}} + \sum_{i=1}^n A_i \otimes h_i )$ is positive.
When $A_0$ is not invertible, apply the standard $A_0 + \varepsilon I_m$ argument as in the proof of Proposition \ref{SOHpos}. Consequently, $\phi$ is completely positive. 
\end{proof}

\begin{cor}
In the previous settings, if $\cl{S}$ is an operator system, then the map $\theta \colon \cl{S}^d \to SOH(n)$ by $\theta(f) = r f(1_{\cl{S}}) H_0 + \sum_{i=1}^n f(h_i) H_i$ is completely positive.
\end{cor}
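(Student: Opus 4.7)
The plan is to recognize $\theta$ as, up to the self-duality of $SOH(n)$, the Banach-space dual of the map $\phi$ from the previous proposition, and then invoke two general facts: (i) the dual of a completely positive map between operator systems is completely positive on the matrix-ordered dual, and (ii) by Theorem \ref{SOHselfdual}, the map $\kappa \colon SOH(n) \to SOH(n)^d$ sending $H_i \mapsto \delta_i$ is a complete order isomorphism, so $\kappa^{-1}$ is completely positive.

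Concretely, I would first compute the dual map $\phi^{d} \colon \cl S^d \to SOH(n)^d$, where $\phi^d(f) = f \circ \phi$. Evaluating on the basis gives $\phi^d(f)(H_0) = r f(1_{\cl S})$ and $\phi^d(f)(H_i) = f(h_i)$ for $1 \le i \le n$, so in the dual basis we have
\[
    \phi^d(f) \;=\; r f(1_{\cl S}) \delta_0 + \sum_{i=1}^n f(h_i) \delta_i.
\]
Since $\phi$ is completely positive by the preceding proposition, $\phi^d$ is completely positive into the matrix-ordered dual $SOH(n)^d$.

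Next I would compose with $\kappa^{-1} \colon SOH(n)^d \to SOH(n)$, which carries $\delta_i$ to $H_i$ for each $i$. Theorem \ref{SOHselfdual} guarantees that $\kappa^{-1}$ is a complete order isomorphism, hence completely positive. Therefore
\[
    (\kappa^{-1} \circ \phi^d)(f) \;=\; r f(1_{\cl S}) H_0 + \sum_{i=1}^n f(h_i) H_i \;=\; \theta(f),
\]
so $\theta = \kappa^{-1} \circ \phi^d$ is completely positive as a composition of completely positive maps.

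There is no real obstacle here: the only thing to verify, if one wants to be self-contained, is that taking the Banach-space dual of a completely positive map between operator systems produces a completely positive map into the matrix-ordered dual. This follows directly from the definitions: for $(f_{jk}) \in M_p(\cl S^d)^+$, the associated map $\cl S \to M_p$ is completely positive, and composing it with $\phi$ on the right preserves complete positivity, which is exactly the statement that $(\phi^d)^{(p)}((f_{jk})) \in M_p(SOH(n)^d)^+$.
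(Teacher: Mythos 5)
Your proof is correct and follows essentially the same route as the paper: identify $\theta$ as $\kappa^{-1}\circ\phi^d$, use complete positivity of $\phi$ (hence of $\phi^d$ into the matrix-ordered dual) and the complete order isomorphism $\kappa$ from Theorem~\ref{SOHselfdual}. Your closing verification that duals of completely positive maps are completely positive is a detail the paper leaves implicit, but the argument is the same.
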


\begin{proof}
The dual map $\phi^d \colon \cl{S}^d \to SOH(n)^d$, $\phi^d (f) (H_i) = f \circ \phi (H_i)$, is completely positive. Let $\kappa \colon SOH(n) \to SOH(n)^d$ be the map $h_i \mapsto \delta_i$ as in Theorem \ref{SOHselfdual}. Then by self-duality of $SOH(n)$, the map $\kappa^{-1} \circ \phi^d \colon \cl{S}^d \to SOH(n)$ is completely positive and an easy calculation shows that $\kappa^{-1} \circ \phi^d (f) = \theta(f)$.
\end{proof}

\section{The $\gamma_{soh}$-Tensor Product} 
One of the important Banach space tensor products arises via factorization of bounded maps through Hilbert space. In this section and the next we construct a tensor product of two operator systems that arises from factorization of completely positive maps through SOH.

In \cite{Ng}, it is shown that the positive cone of the maximal tensor
product of finite dimensional operator systems, $\cl{S} \otimes_{\max}
\cl{T}$, can be identified with the completely positive maps from
$\cl{S}^d$ to $\cl{T}$ that factor through $M_n$ approximately;
equivalently these are the nuclear maps. Motivated by this
characterization, we will construct the $\gamma_{soh}$ tensor product
similarly by using $M_p(SOH(n))$ instead of $M_n$. We show that
$\phi_1 \otimes \phi_2 \colon \cl{S}_1 \otimes_{\gamma_{soh}} \cl{S}_2
\to \cl{T}_1 \otimes_{\gamma_{soh}} \cl{T}_2$ is completely positive
whenever $\phi_i \colon \cl{S}_i \to \cl{T}_i$ is completely positive.
We prove that $\gamma_{soh}$ is a functorial and symmetric tensor
product structure in the category of  finite dimensional operator
systems. We also prove that $\gamma_{soh}$ is a distinct tensor
product from many of the functorial tensors studied in \cite{KPTT1, KPTT2, FKPT}. 

\begin{defn} Let $\cl S$ and $\cl T$ be operator systems.
We say that $\hat{u}: \cl S^d \to \cl T$ {\bf factors through $SOH$ approximately}, provided there exist nets of completely positive maps $\phi_{\lambda} \colon \cl{S}^d \to M_{p_{\lambda}} ( SOH(n_{\lambda}) )$ and $\psi_{\lambda} \colon M_{p_{\lambda}} (SOH(n_{\lambda})) \to \cl{T}$ such that $\psi_{\lambda} \circ \phi_{\lambda}$ converges to $\hat{u}$ in the point-norm topology. 
\end{defn}

\begin{defn}[The $\gamma_{soh}$-cone]
Let $\cl{S}$ and $\cl{T}$ be finite dimensional operator systems. Define
	\begin{equation*}
		\cl{C}_1^{\gamma} (\cl{S}, \cl{T} ) 	:= \{ u \in \cl{S} \otimes \cl{T} \colon \hat{u} \text{ factors through } SOH \text{ approximately} \}.
	\end{equation*}
For $u = [u_{ij}] \in M_n( \cl{S} \otimes \cl{T} )$, we regard $\hat{u} = [ \hat{u_{ij}} ]$ as a map from $\cl{S}^d$ to $M_n(\cl{T})$. Thus there is no confusion to define $\cl{C}_n^{\gamma}( \cl{S}, \cl{T} ) = \cl{C}_1^{\gamma}( \cl{S}, M_n( \cl{T} ) )$ in $M_n( \cl{S} \otimes \cl{T} )$. We denote the triple $( \cl{S} \otimes \cl{T},  \{ \cl{C}_n^{\gamma}( \cl{S}, \cl{T} ) \}_{n=1}^{\infty}, 1_{\cl{S}} \otimes 1_{\cl{T}} )$ by $\cl{S} \otimes_{\gamma_{soh}} \cl{T}$. 
\end{defn}

\begin{prop}
The collection $\{ \cl{C}_n^{\gamma}( \cl{S}, \cl{T} ) \}$ is a compatible family of proper cones of $\cl{S} \otimes \cl{T}$.
\end{prop}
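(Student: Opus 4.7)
To prove the proposition I must verify that (i) each $\cl{C}_n^{\gamma}$ is closed under nonnegative scalar multiplication and addition, (ii) each is proper in the sense that $\cl{C}_n^{\gamma} \cap (-\cl{C}_n^{\gamma}) = \{0\}$, and (iii) the family is matrix-compression compatible, i.e.\ $A^* \cl{C}_n^{\gamma} A \subseteq \cl{C}_m^{\gamma}$ for any scalar $A \in M_{n,m}$. Scalar multiplication by $t \geq 0$ is immediate by rescaling $\phi_\lambda$ in the factorization. Compatibility follows because $\widehat{A^* u A}(f) = A^* \hat{u}(f) A$ is the composition of $\hat{u}$ with the CP map $\sigma_A \colon M_n(\cl{T}) \to M_m(\cl{T})$, $Y \mapsto A^* Y A$, so composing each approximating $\psi_\lambda \phi_\lambda$ on the left by $\sigma_A$ yields an approximating factorization of $\widehat{A^* u A}$ through the same $M_{p_\lambda}(SOH(n_\lambda))$. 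For properness, note that in finite dimensions the completely positive maps into $M_n(\cl{T})$ form a closed subset, so if $\pm \hat{u}$ are both point-norm limits of CP maps then both are CP, forcing $\hat{u} = 0$.

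Closure under addition is the substantive step. Fix $\epsilon > 0$ and a finite test set $F \subset \cl{S}^d$, and choose approximate CP factorizations $\psi_i \circ \phi_i$ through $M_{p_i}(SOH(n_i))$ with $\|\psi_i\phi_i(f) - \hat{u_i}(f)\| < \epsilon/2$ on $F$. Let $\iota_j \colon SOH(n_j) \hookrightarrow SOH(n_1+n_2)$ be the UCP complete order inclusion from Proposition \ref{SOHcoi} placing the two blocks of basis vectors in disjoint coordinate subsets, and let $V_j \colon \bb{C}^{p_j} \to \bb{C}^{p_1+p_2}$ be the canonical corner isometry. Define
\[ \Phi(x) = V_1 \iota_1(\phi_1(x)) V_1^* + V_2 \iota_2(\phi_2(x)) V_2^* \in M_{p_1+p_2}(SOH(n_1+n_2)), \]
where $\iota_j$ is applied entry-wise; $\Phi$ is CP as a sum of two compositions of CP maps. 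Assuming the coordinate projection $\tau_j \colon SOH(n_1+n_2) \to SOH(n_j)$ (sending $H_0 \mapsto H_0$, $H_i \mapsto H_i$ for $i$ in the $j$-th block, and $H_i \mapsto 0$ elsewhere) is UCP, define
\[ \Psi(Y) = \psi_1\bigl(\tau_1(V_1^* Y V_1)\bigr) + \psi_2\bigl(\tau_2(V_2^* Y V_2)\bigr), \]
which is a sum of compositions of CP maps and hence CP. The relations $\tau_j \circ \iota_j = \mathrm{id}_{SOH(n_j)}$ and $V_j^* \Phi(x) V_j = \iota_j(\phi_j(x))$ yield $\Psi \circ \Phi = \psi_1\phi_1 + \psi_2\phi_2$, so this provides an exact CP factorization through $M_{p_1+p_2}(SOH(n_1+n_2))$ that $\epsilon$-approximates $\hat{u_1}+\hat{u_2}$ on $F$.

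The main obstacle is the unital complete positivity of the projection $\tau_j$, which I establish by a \emph{sign-flip} argument. Let $\mathfrak{u}_j \colon SOH(n_1+n_2) \to SOH(n_1+n_2)$ be the endomorphism fixing $H_0$ and each $H_i$ in the $j$-th block and negating the remaining $H_i$. Since $(-A_i) \otimes \overline{(-A_i)} = A_i \otimes \overline{A_i}$, the positivity criterion of Proposition \ref{SOHpos} is invariant under $\mathfrak{u}_j$ at every matrix level, so $\mathfrak{u}_j$ is a unital complete order automorphism of $SOH(n_1+n_2)$ (essentially the same unitary conjugation used in Proposition \ref{SOHpos} to equate its first two bullets). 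The projection $\tau_j = \tfrac{1}{2}(\mathrm{id} + \mathfrak{u}_j)$, composed with the UCP identification $\iota_j^{-1}$ of its image with $SOH(n_j)$, is then UCP as a convex combination of UCP maps. Everything else in the proof is routine bookkeeping.
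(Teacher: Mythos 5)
Your proof is correct and follows essentially the same route as the paper: a block-diagonal direct sum of the two approximate factorizations, with the matrix parts placed in complementary corners of $M_{p_1+p_2}$ and both $SOH$ factors embedded in a single larger $SOH$. The one point where you add genuine content is the sign-flip conditional expectation $\tau_j = \tfrac{1}{2}(\mathrm{id}+\mathfrak{u}_j)$, which rigorously supplies the UCP projection that the paper dispatches with the unproved remark that ``every completely positive map on $SOH(n_{\lambda_k})$ can be extended naturally on $SOH(n_{\lambda})$''; you also verify properness of the cone via point-norm closedness of the completely positive maps, a check the paper omits.
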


\begin{proof}
Since $\cl{C}_n^{\gamma}( \cl{S}, \cl{T} ) = \cl{C}_1^{\gamma}( \cl{S}, M_n(\cl{T}))$, it suffices to check that $\cl{C}_1^{\gamma}( \cl{S}, \cl{T} )$ is a proper cone. 
It is obvious that $\cl{C}_1^{\gamma}(\cl{S}, \cl{T} )$ is closed under positive scalar multiplication. Let $u_1, u_2 \in \cl{C}_{\gamma}^{1}( \cl{S}, \cl{T} )$, so there are nets of completely positive maps $\phi_{\lambda_k}, \psi_{\lambda_k}$, where $k= 1, 2$ such that $\lim_{\lambda} \psi_{\lambda_k} \circ \phi_{\lambda_k} =  \hat{u_k}$ in the point-norm topology.

Consider the directed set $\Lambda = \{ (\lambda_1, \lambda_2) \}$ with the natural ordering. For each $\lambda = ( \lambda_1, \lambda_2 ) \in \Lambda$, regard $M_{p_{\lambda}} = M_{p_{\lambda_1}} \oplus M_{p_{\lambda_2}}$ as the 2-by-2 block and let $n_{\lambda} = \max\{n_{\lambda_1}, n_{\lambda_2} \}$. Note that every completely positive map on $SOH(n_{\lambda_k})$, $k=1, 2$, can be extended naturally on $SOH(n_{\lambda})$. Thus without loss of generality we may assume that $\phi_{\lambda_k}$ maps into $M_{p_{\lambda}} \otimes SOH(n_{\lambda})$ and $\psi_{\lambda_k}$ has domain $M_{p_{\lambda}} \otimes SOH(n_{\lambda_k})$. 

Thus, for each $\lambda = (\lambda_1, \lambda_2)$, we take $M_{p_\lambda} (SOH( n_{\lambda} ) )$, with completely positive maps $\phi_{\lambda} = \phi_{\lambda_1} \oplus \phi_{\lambda_2}$ and $\psi_{\lambda} = \psi_{\lambda_1} \oplus \psi_{\lambda_2}$. It remains to check that $\psi_{\lambda} \circ \phi_{\lambda}$ converges to $\widehat{(u_1 + u_2)}$ in the point-norm topology. Indeed, given $f \in \cl{S}^d$ and $\varepsilon > 0$, by assumption there exist $\mu_1$ and $\mu_2$ so that $|| \hat{u_k} (f) - \psi_{\lambda_k} \circ \phi_{\lambda_k} (f) || < \frac{\varepsilon}{2}$, for $\lambda_k > \mu_k$. Thus if $\mu = (\mu_1, \mu_2)$ and $\lambda > \mu$, then 
	\begin{equation*}
		|| \widehat{(u_1+u_2)} (f) - (\psi_{\lambda} \circ \phi_{\lambda} ) (f) || \leq	\sum_{k=1}^2 || \hat{u_k} (f) - (\psi_k \circ \phi_k)( f) || < \varepsilon
	\end{equation*}
shows that  $u_1 + u_2$ is in $\cl{C}_1^{\gamma}( \cl{S}, \cl{T} )$. 

Next we verify compatability. Let $u = [u_{ij}] \in \cl{C}_n^{\gamma}( \cl{S}, \cl{T} )$ with $\hat{u}$ factors through $SOH$ approximately via nets $\psi_{\lambda}$ and $\phi_{\lambda}$. Write $A = [a_{kl}] \in M_{m, n}$, and write $w = A u A^* \in M_m( \cl{S} \otimes \cl{T} )$. We claim that $\hat{w}$ also factors through $SOH$ approximately via the nets $(\theta_A \circ \psi_{\lambda} )$ and $\phi_{\lambda}$, where $\theta_A \colon M_n(\cl{T}) \to M_m( \cl{T} )$ by $B \mapsto ABA^*$ is completely positive. To this end, note that by writing $w = [ \sum_{k, l}^n a_{i, k} u_{k, l} \overline{a_{l, j}} ]_{i, j=1}^m$, for each $f \in \cl{S}^d$
	\begin{align*}
		\hat{w}(f) 	&=	\left[		\sum_{k,l=1}^n	\hat{( a_{i,k} u_{k,l} \overline{a_{l,j}} )} (f) 	\right]_{i,j=1}^m	\\
							&=	\left[		\sum_{k,l=1}^n	a_{i,k} \hat{u}_{k,l}(f)  \overline{a_{l,j}}	\right]_{i,j=1}^m	
							=	A \hat{u}(f) A^*	=	( \theta_A \circ \hat{u} ) (f). 
	\end{align*}
Thus, for each $f \in \cl{S}^d$, 
	\begin{equation*}
		|| \hat{w} (f) - \theta_A \circ \psi_{\lambda} \circ \phi_{\lambda} (f) || 	=	|| \theta_A \circ ( \hat{u} - \psi_{\lambda} \circ \phi_{\lambda} ) (f)   || \to 0.
	\end{equation*}
Therefore, $\{ \cl{C}_n^{\gamma} (\cl{S}, \cl{T} ) \}$ is a compatible family of proper cones. 
\end{proof}

\begin{prop}
The unit $1_{\cl{S}} \otimes 1_{\cl{T}}$ is an Archimedean matrix order unit for $\cl{S} \otimes_{\gamma_{soh}} \cl{T}$. 
\end{prop}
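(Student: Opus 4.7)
The plan is to verify two properties separately: first, that $e_n := I_n \otimes 1_{\cl S} \otimes 1_{\cl T}$ is a matrix order unit for the family of cones $\{\cl C_n^{\gamma}(\cl S, \cl T)\}$, and second, that this order unit is Archimedean. Both reductions rely essentially on the Corollary and preceding Proposition at the end of Section~4, which furnish completely positive maps in and out of $SOH(k)$.

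For the order-unit property, I would decompose a given self-adjoint $u \in M_n(\cl S \otimes \cl T) \cong \cl S \otimes M_n(\cl T)$ as $u = \sum_{i=1}^{k} s_i \otimes T_i$ with $s_i \in \cl S_{sa}$ and $T_i \in M_n(\cl T)_{sa}$, which is always possible since the self-adjoint part of such a $*$-tensor product is the real span of simple self-adjoint tensors. After rescaling the $s_i$ (absorbing constants into the $T_i$) I may assume $\|s_i\| \le 1$. Applying the Corollary to $\cl S$ with $h_i = s_i$ yields $r_0 > 0$ and a cp map $\theta \colon \cl S^d \to SOH(k)$ with $\theta(f) = r_0 f(1_{\cl S}) H_0 + \sum_{i=1}^{k} f(s_i) H_i$. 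Applying the preceding Proposition to the operator system $M_n(\cl T)$ with $h_i = T_i/C$, for some $C \ge \max_i \|T_i\|$, gives $r_1 > 0$ and a cp map $\tilde\psi \colon SOH(k) \to M_n(\cl T)$ with $\tilde\psi(H_0) = r_1(I_n \otimes 1_{\cl T})$ and $\tilde\psi(H_i) = T_i/C$. Positive rescaling then makes $\psi := C\tilde\psi$ still cp, with $\psi(H_0) = Cr_1(I_n \otimes 1_{\cl T})$ and $\psi(H_i) = T_i$. Hence $\psi \circ \theta \colon \cl S^d \to M_1(SOH(k)) \to M_n(\cl T)$ is a \emph{genuine} cp-factorization through $SOH$, and a direct computation gives
\[ \psi \circ \theta(f) \;=\; Cr_0 r_1\, f(1_{\cl S})(I_n \otimes 1_{\cl T}) + \sum_{i=1}^k f(s_i) T_i, \]
which is $\widehat{r e_n + u}$ for $r := Cr_0 r_1 > 0$, so that $r e_n + u \in \cl C_n^{\gamma}(\cl S, \cl T)$.

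For the Archimedean property, suppose $u + \varepsilon e_n \in \cl C_n^{\gamma}(\cl S, \cl T)$ for every $\varepsilon > 0$. Since $\cl S$ and $M_n(\cl T)$ are both finite-dimensional, the space of linear maps $\cl S^d \to M_n(\cl T)$ is itself finite-dimensional, so its point-norm topology coincides with the operator-norm topology. For each $k \in \bb N$, I would choose an actual cp-factorization $\psi_k \circ \phi_k \colon \cl S^d \to M_{p_k}(SOH(n_k)) \to M_n(\cl T)$ within norm $1/k$ of $\widehat{u + (1/k)e_n}$; then $\|\psi_k \circ \phi_k - \hat u\| \le 1/k + (1/k)\|\widehat{e_n}\| \to 0$, so $\hat u$ is the point-norm limit of a sequence of cp-factorizations through $SOH$, giving $u \in \cl C_n^{\gamma}(\cl S, \cl T)$.

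The only step requiring care is the order-unit half, where one must bundle the Corollary and Proposition of Section~4 into a single cp-factorization whose ``constant part'' (produced by the $H_0$-component of $SOH(k)$) is precisely a positive scalar multiple of the order unit; this is essentially a bookkeeping exercise and I do not foresee a serious obstacle. The Archimedean step is a standard diagonal argument exploiting the collapse of the point-norm topology to the norm topology in finite dimensions.
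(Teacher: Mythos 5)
Your proposal is correct and follows essentially the same route as the paper: the order-unit half bundles Proposition 4.6 and Corollary 4.7 into an exact cp-factorization of $\widehat{r\,1\otimes 1 \pm u}$ through $SOH(k)$, and the Archimedean half uses finite-dimensionality of $\cl{S}^d$ to upgrade point-norm to uniform convergence and then diagonalizes. The only (harmless) differences are that you work at matrix level $n$ by passing to $M_n(\cl{T})$ exactly as the paper's identification $\cl{C}_n^{\gamma}(\cl{S},\cl{T})=\cl{C}_1^{\gamma}(\cl{S},M_n(\cl{T}))$ does, and you extract a sequence where the paper builds a net over pairs $(\varepsilon,k)$.
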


\begin{proof}
Again by identifying $\cl{C}_n^{\gamma} (\cl{S}, \cl{T} ) = \cl{C}_1^{\gamma} ( \cl{S}, M_n(\cl{T} ) )$, it suffices to prove that $1_{\cl{S}} \otimes 1_{\cl{T}}$ is an Archimedean order unit for $\cl{S} \otimes_{\gamma} \cl{T}$ on the ground level. Let $u \in \cl{S} \otimes \cl{T}$ be self-adjoint, we must find an $r > 0$ so that $r 1_{\cl{S}} \otimes 1_{\cl{T}} - u$ is in $\cl{C}_{\gamma}^1(\cl{S}, \cl{T})$. Withoutloss of generality, we may assume $u = \sum_{i=1}^{n} x_i \otimes y_i$, where $x_i = x_i^*$ and $y_i = y_i^*$. By Proposition 4.6 and Corollary 4.7, there exist $r_1, r_2 > 0$ such that the map $\phi \colon \cl{S}^d \to SOH(n)$ by $\phi(f) = r_1 f(1_{\cl{S}}) H_0 - \sum_{i=1}^n f( x_i ) H_i$, and $\psi \colon SOH(n) \to \cl{T}$ by $\psi(H_0) = r_2 1_{\cl{T}}$, $\psi(H_i) = y_i$ are completely positive. Choose $r = r_1 r_2$, then
	\begin{align*}
		\psi ( \phi (f) ) 	&=	r_1 r_2 f(1_{\cl{S}}) 1_{\cl{T}} - \sum_{i=1}^n f(x_i)y_i = \widehat{ (r 1_{\cl{S}} \otimes 1_{\cl{T}}  - u)} (f)
	\end{align*} 
shows that $(\widehat{r1_{\cl{S}} \otimes 1_{\cl{T}} - u} )$ factors through $SOH(n)$ exactly. Consequently, $1_{\cl{S}} \otimes 1_{\cl{T}}$ is an order unit for $\cl{S} \otimes_{\gamma} \cl{T}$. 

Finally suppose $u = \sum_{i=0}^n x_i \otimes y_i \in  \cl{S} \otimes \cl{T} $ and for each $\varepsilon > 0$, $u_{\varepsilon} = u + \varepsilon (1_{\cl{S}} \otimes 1_{\cl{T}} ) \in \cl{C}_1^{\gamma}(\cl{S}, \cl{T} )$. For each $\varepsilon$, there is a net of completely positive maps $\phi_{\lambda_{\varepsilon}}$ and $\psi_{\lambda_{\varepsilon}}$ such that
	\begin{equation*}
		\xymatrix{
			\cl{S}^d 	\ar[rr]^{\hat{u_{\varepsilon}}}	\ar[rd]_{\phi_{\lambda_{\varepsilon}}} &			&		\cl{T}		\\
					&	M_{p_{\lambda_{\varepsilon}}} ( SOH(n_{\lambda_{\varepsilon}} ) ) 	\ar[ru]_{\psi_{\lambda_{\varepsilon}}}		&
		}
	\end{equation*}
and $|| \hat{u_{\varepsilon}}(f) - ( \psi_{\lambda_{\varepsilon}} \circ \phi_{\lambda_{\varepsilon}} ) (f) || \to 0$, for each $f \in \cl{S}^d$. 

Hence for each fixed $\varepsilon$, by finite dimensionality of $\cl{S}^d$, there exist a sufficiently large $k > \frac{1}{\varepsilon}$ and a pair of completely positive maps $\phi_{\lambda_{(\varepsilon, k)}}$ and $\psi_{\lambda_{(\varepsilon, k ) } }$ from the net $(\psi_{\lambda_{\varepsilon}} \circ \phi_{\lambda_{\varepsilon}} )$, such that $|| \hat{u_{\varepsilon}}(f) - ( \psi_{\lambda_{(\varepsilon, k)}} \circ \phi_{\lambda_{(\varepsilon, k)}} ) (f) || < \frac{1}{k}$, for every $|| f || \leq 1$.

Consider the directed set $\Lambda$ consisting of $(\varepsilon, k)$ subject to the above condition, and order it by $(\varepsilon, k) \leq ( \varepsilon', k')$ if and only if $\varepsilon' \leq \varepsilon$ and $k' \geq k$. Now we claim that $(\psi_{\lambda} \circ \phi_{\lambda} )_{\lambda \in \Lambda}$ converges to $\hat{u}$ in the point-norm topology. Given $f \in \cl{S}^d$ with $|| f || \leq 1$, for each $m > 0$, consider for $\varepsilon > \frac{1}{2m}$ and those $\lambda = ( \varepsilon, k)$, 
	\begin{align*}
		|| \hat{u} (f) - ( \psi_{\lambda} \circ \phi_{\lambda} ) (f) || 	&=	|| \hat{u} (f) - \hat{u_{\varepsilon}} (f) + \hat{u_{\varepsilon}} (f) - ( \psi_{\lambda} \circ \phi_{\lambda} ) (f) 	||	\\
				&\leq 	||  \hat{u} (f) - \hat{u_{\varepsilon}} (f)  || + ||	 \hat{u_{\varepsilon}} (f) - ( \psi_{\lambda} \circ \phi_{\lambda} ) (f)	 ||	\\
				&< 	\frac{1}{2m}   + \frac{1}{2m}.
	\end{align*}
Therefore, $\hat{u}$ factors through $M_p(SOH(n))$ approximately and $u \in \cl{C}_1^{\gamma}( \cl{S}, \cl{T} )$. Consequently, $1_{\cl{S}} \otimes 1_{\cl{T}}$ is an Archimedean matrix order unit. 
\end{proof}

\begin{defn}
The triple $ (\cl{S} \otimes \cl{T}, \cl{C}_n^{\gamma}(\cl{S}, \cl{T}), 1_{\cl{S}} \otimes 1_{\cl{T}} )$ is an operator system, and we denote it by $\cl{S} \otimes_{\gamma_{soh}} \cl{T}$. 
\end{defn}

\begin{thm}
The $\gamma_{soh}$-tensor defines a functorial operator system tensor product structure in the category of finite dimensional operator systems.
\end{thm}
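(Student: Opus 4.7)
The plan is to verify the three axioms of a functorial operator system tensor product in the sense of \cite{KPTT1}. The previous propositions already furnish the operator system structure on $\cl{S} \otimes_{\gamma_{soh}} \cl{T}$, so three things remain: (T2) that $P \otimes Q \in \cl{C}_{nm}^{\gamma}(\cl{S}, \cl{T})$ whenever $P \in M_n(\cl{S})^+$ and $Q \in M_m(\cl{T})^+$; (T3) that $\phi \otimes \psi$ is UCP into $M_{pq}$ for any UCP $\phi \colon \cl{S} \to M_p$ and $\psi \colon \cl{T} \to M_q$; and (T4) that $\phi_1 \otimes \phi_2$ is UCP between the $\gamma_{soh}$-tensor products for any UCP maps $\phi_i \colon \cl{S}_i \to \cl{T}_i$ between finite-dimensional operator systems.

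Axiom (T3) is the easiest. If $u \in \cl{C}_n^{\gamma}(\cl{S}, \cl{T}) = \cl{C}_1^{\gamma}(\cl{S}, M_n(\cl{T}))$, then $\hat{u}$ is a point-norm limit of CP compositions and hence itself CP. Since $\cl{S}$ is finite-dimensional, the standard identification $\cl{S} \otimes_{\min} M_n(\cl{T}) \cong CP(\cl{S}^d, M_n(\cl{T}))$ places $u$ in the min-positive cone at level $n$; consequently, $\phi \otimes \psi$, already UCP out of $\cl{S} \otimes_{\min} \cl{T}$, is UCP out of $\cl{S} \otimes_{\gamma_{soh}} \cl{T}$ as well.

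For (T2), observe that $\widehat{P \otimes Q}(f) = \hat{P}(f) \otimes Q$ factors exactly as $\cl{S}^d \xrightarrow{\hat{P}} M_n \xrightarrow{(\cdot) \otimes Q} M_{nm}(\cl{T})$, where $\hat{P}$ is CP because $P \in M_n(\cl{S})^+$ and $(\cdot) \otimes Q$ is CP because $Q \in M_m(\cl{T})^+$. To bring $M_p(SOH(r))$ into the picture, I interpose the unital CP embedding $\iota \colon M_n \to M_n(SOH(r))$ given by $A \mapsto A \otimes H_0$ together with the ``$H_0$-coefficient'' slice $\pi \colon M_n(SOH(r)) \to M_n$ sending $\sum_i A_i \otimes H_i$ to $A_0$. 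Proposition \ref{SOHpos} asserts that the $H_0$-coefficient of a positive element of $M_k(M_n(SOH(r))) = M_{kn}(SOH(r))$ is itself positive, so $\pi$ is CP at every matrix level. Since $\pi \circ \iota = \mathrm{id}_{M_n}$, the exact factorization
\[
\widehat{P \otimes Q} = \bigl( ((\cdot) \otimes Q) \circ \pi \bigr) \circ \bigl( \iota \circ \hat{P} \bigr)
\]
realizes $\widehat{P \otimes Q}$ as a CP composition through $M_n(SOH(r))$, so $P \otimes Q \in \cl{C}_{nm}^{\gamma}(\cl{S}, \cl{T})$.

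Finally, for (T4), I exploit the identity $\widehat{(\phi_1 \otimes \phi_2)(u)} = \phi_2 \circ \hat{u} \circ \phi_1^d$, where $\phi_1^d \colon \cl{T}_1^d \to \cl{S}_1^d$ is CP as the dual of a CP map. If $\hat{u} \approx \psi_\lambda \circ \phi_\lambda$ is an approximate factorization through $M_{p_\lambda}(SOH(n_\lambda))$, then pre- and post-composing produces $(\phi_2 \circ \psi_\lambda) \circ (\phi_\lambda \circ \phi_1^d)$, an approximate CP factorization of $\widehat{(\phi_1 \otimes \phi_2)(u)}$ through the same space, with point-norm convergence preserved by the boundedness of $\phi_2$. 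The matrix-level version reduces to the ground level via $\cl{C}_n^{\gamma}(-, -) = \cl{C}_1^{\gamma}(-, M_n(-))$ together with the observation that $\phi_2^{(n)}$ is CP whenever $\phi_2$ is. The main obstacle is the correct handling of (T2): one must verify that Proposition \ref{SOHpos} yields complete positivity (not just positivity at one level) of the slice map $\pi$, since the inclusion-slice construction that transfers the exact factorization from $M_n$ to $M_n(SOH(r))$ depends on $\pi$ being CP at every matrix level.
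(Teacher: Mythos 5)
Your proposal is correct and follows essentially the same route as the paper: (T2) is obtained by an exact CP factorization of $\widehat{P \otimes Q}$ through $M_n(SOH(1))$ using $A \mapsto A \otimes H_0$ and the positive $H_0$-slice guaranteed by Proposition~\ref{SOHpos} (the paper writes this factorization as $P \otimes H_0$ followed by $I_n \otimes Q$), and functoriality is proved by the identical pre/post-composition $\widehat{w} = \kappa \circ \hat{u} \circ \rho^{d} = \lim_\lambda (\kappa \circ \psi_\lambda)\circ(\phi_\lambda \circ \rho^{d})$. Your explicit verification that the slice map $\pi$ is CP at every matrix level, and your separate check of (T3) via containment of the $\gamma_{soh}$-cone in the min-cone, are fine elaborations of points the paper leaves implicit.
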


\begin{proof}
Let $P \in M_n( \cl{S} )^+$ and $Q \in M_m( \cl{T} )^+$. Note that by regarding $\cl{S} = \cl{S}^{dd}$ and $P \colon \cl{S}^d \to M_n$, then $\hat{(P \otimes Q)} \colon \cl{S}^d \to M_{nm}(\cl{T})$ maps $f$ to $P(f) \otimes Q$. Moreover,  $\hat{P \otimes Q}$ factors through $M_n \otimes SOH(1)$ via
\begin{equation*}
 \xymatrix{    \cl{S}^d	\ar[rd]_{P \otimes H_0 } \ar[rr]^{\hat{(P \otimes Q)}}	&		&	M_{nm}(\cl{T})	\\
	&M_n \otimes SOH(1)	\ar[ru]_{ I_n \otimes Q}	& }
\end{equation*}
Therefore, $P \otimes Q \in \cl{C}_{nm}^{\gamma}(\cl{S}, \cl{T})$. 

For the functorial property, let $\rho \colon \cl{S} \to \cl{V}$ and $\kappa \colon \cl{T} \to \cl{W}$ be completely positive maps between finite dimensional operator systems, and let $u \in \cl{S} \otimes_{\gamma} \cl{T}$ be positive. Thus $\hat{u}$ factors through $M_p ( SOH(n) )$ approximately via some $\phi_{\lambda}$ and $\psi_{\lambda}$. Let $w = ( \rho \otimes \kappa )(u) \in \cl{V} \otimes \cl{W}$. Notice this diagram 
	\begin{equation*}
		\xymatrix{
			\cl{V}^d		\ar[rr]^{\hat{w}} \ar[d]_{\rho^d} &			&	\cl{W}	\\
			\cl{S}^d	\ar[rr]^{\hat{u}} \ar[dr]_{\phi_{\lambda}}&		&	\cl{T}	\ar[u]_{\kappa} \\
							&		M_{p_{\lambda}}(SOH(n_{\lambda}))		\ar[ru]_{\psi_{\lambda}} &
		}
	\end{equation*}
commutes and the maps are all completely positive. Indeed, if $w = \sum_{i=1}^n \rho( x_i ) \otimes \kappa (y_i)$, where $u = \sum_{i=1}^n x_i \otimes y_i$, then for each $f \in \cl{V}^d$,  
	\begin{align*}
		\hat{w}(f) 	&=	\sum_{i=0}^n	f ( \rho (x_i) ) \kappa (y_i) 	=	( \kappa \circ \hat{u} \circ \rho^d ) (f)	\\	
							&=	\lim_{\lambda} ( \kappa \circ \psi_{\lambda}) \circ ( \phi_{\lambda} \circ \rho^d ) (f) .					
	\end{align*}
Therefore, $\hat{w}$ also factors through $M_p ( SOH(n)) $ approximately and $w \in  (\cl{V} \otimes_{\gamma_{soh}} \cl{W})^+$. For $u = [u_{ij}] \in M_n ( \cl{S} \otimes_{\gamma_{soh}} \cl{T} )^+$, in the same vein we regard $\hat{u} \colon \cl{S} \to M_n( \cl{T} )$. Then by replacing $\kappa$ by $\kappa \otimes I_n$ and $\cl{W}$ by $M_n( \cl{W} )$ we deduce that $\hat{u}$ factors through $SOH$ approximately. Consequently $\rho \otimes \kappa$ is completely positive and the $\gamma_{soh}$-tensor product is functorial.
\end{proof}

\begin{remark}
In \cite{FKPT}, the $ess$-tensor product $\cl{S} \otimes_{ess} \cl{T}$ arises by the inclusion in $C_e^*(\cl{S}) \otimes_{\max} C_e^* (\cl{T} )$, where $C_e^*(\cl{S} )$ is the enveloping $C^*$-algebra of $\cl{S}$. It was yet to know whether this tensor product is functorial. Recently in \cite[Proposition 3.2]{GL}, it is shown that the $ess$-tensor product is not functorial. This allows us to distinguish $\gamma_{soh}$ from $ess$.
\end{remark}

\begin{cor}
The $\gamma{soh}$-tensor product is not the $ess$-tensor product.
\end{cor}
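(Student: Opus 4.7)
The plan is to give a one-line argument by contradiction, leveraging the theorem just proved together with the cited result from \cite{GL} referenced in the preceding remark. Specifically, I would argue that functoriality is an invariant of the tensor product structure: two tensor products $\alpha$ and $\beta$ which are identical on every pair $(\cl S, \cl T)$ of finite dimensional operator systems must either both be functorial or both fail to be functorial, since functoriality is a statement purely about the positive cones produced.

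The preceding theorem establishes that the $\gamma_{soh}$-tensor product is functorial on finite dimensional operator systems, meaning that for any completely positive maps $\rho \colon \cl S \to \cl V$ and $\kappa \colon \cl T \to \cl W$ between finite dimensional operator systems, the map $\rho \otimes \kappa \colon \cl S \otimes_{\gamma_{soh}} \cl T \to \cl V \otimes_{\gamma_{soh}} \cl W$ is completely positive. On the other hand, the remark cites \cite[Proposition 3.2]{GL} showing that the $ess$-tensor product fails to be functorial, i.e.\ there exist completely positive maps $\rho$, $\kappa$ for which $\rho \otimes \kappa \colon \cl S \otimes_{ess} \cl T \to \cl V \otimes_{ess} \cl W$ is not completely positive.

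If the $\gamma_{soh}$- and $ess$-tensor products coincided on all pairs of finite dimensional operator systems, then for the specific $\rho$, $\kappa$, $\cl S$, $\cl T$, $\cl V$, $\cl W$ witnessing the failure of $ess$-functoriality, the map $\rho \otimes \kappa$ would have identical domain and codomain in both tensor products, and hence would be completely positive (by functoriality of $\gamma_{soh}$) and not completely positive (by the result in \cite{GL}) simultaneously. This contradiction forces the two tensor products to be distinct, completing the proof. There is no genuine obstacle here; the entire content lies in combining the functoriality theorem just established with the cited non-functoriality result.
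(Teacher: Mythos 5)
Your argument is correct and is exactly the one the paper intends: the corollary is deduced from the functoriality of $\gamma_{soh}$ established in the preceding theorem together with the non-functoriality of the $ess$-tensor product cited from \cite{GL} in the remark just above. The paper leaves this as immediate; you have merely spelled out the same contradiction explicitly.
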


We deduce further properties of the $\gamma_{soh}$-tensor product. 

\begin{prop}
The $\gamma_{soh}$-tensor is symmetric.
\end{prop}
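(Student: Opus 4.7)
The plan is to show that the canonical flip $\tau \colon \cl{S} \otimes \cl{T} \to \cl{T} \otimes \cl{S}$ sending $x \otimes y$ to $y \otimes x$ is a unital complete order isomorphism from $\cl{S} \otimes_{\gamma_{soh}} \cl{T}$ onto $\cl{T} \otimes_{\gamma_{soh}} \cl{S}$. Since $\tau$ is a unital involution, via the identification $\cl{C}_n^{\gamma}(\cl{S}, \cl{T}) = \cl{C}_1^{\gamma}(\cl{S}, M_n(\cl{T}))$ the task reduces to checking that $\tau$ maps $\cl{C}_1^{\gamma}(\cl{S}, \cl{T})$ into $\cl{C}_1^{\gamma}(\cl{T}, \cl{S})$; the reverse inclusion will then follow by the symmetric argument with $\cl{S}$ and $\cl{T}$ swapped.

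First I would establish the identity $\widehat{\tau(u)} = \hat{u}^d$, where $\hat{u}^d \colon \cl{T}^d \to \cl{S}^{dd}$ is regarded as a map into $\cl{S}$ via the canonical isomorphism $\cl{S}^{dd} \cong \cl{S}$ valid in finite dimensions. Writing $u = \sum_i x_i \otimes y_i$ and pairing $\hat{u}^d(g)$ against $f \in \cl{S}^d$ yields $f(\hat{u}^d(g)) = g(\hat{u}(f)) = \sum_i f(x_i) g(y_i) = f(\widehat{\tau(u)}(g))$, which gives the claim.

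Next, supposing $u \in \cl{C}_1^{\gamma}(\cl{S}, \cl{T})$ with witnesses $\phi_\lambda \colon \cl{S}^d \to M_{p_\lambda}(SOH(n_\lambda))$ and $\psi_\lambda \colon M_{p_\lambda}(SOH(n_\lambda)) \to \cl{T}$, I would take matrix-ordered duals to produce completely positive maps $\psi_\lambda^d \colon \cl{T}^d \to M_{p_\lambda}(SOH(n_\lambda))^d$ and $\phi_\lambda^d \colon M_{p_\lambda}(SOH(n_\lambda))^d \to \cl{S}^{dd} \cong \cl{S}$, with $\phi_\lambda^d \circ \psi_\lambda^d$ converging in the point-norm topology to $\hat{u}^d = \widehat{\tau(u)}$. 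The crux is then to exhibit a complete order isomorphism of matrix-ordered spaces $M_p(SOH(n))^d \cong M_p(SOH(n))$, obtained by composing the standard identification $M_p(V)^d \cong M_p(V^d)$ with the self-duality $\kappa \colon SOH(n) \to SOH(n)^d$ of Theorem~\ref{SOHselfdual} applied entrywise. Inserting this isomorphism between $\psi_\lambda^d$ and $\phi_\lambda^d$ exhibits an approximate $SOH$-factorization of $\widehat{\tau(u)}$, so $\tau(u) \in \cl{C}_1^{\gamma}(\cl{T}, \cl{S})$.

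The main obstacle is the matrix-ordered identification $M_p(V)^d \cong M_p(V^d)$: one must verify that the natural vector-space pairing gives a complete order isomorphism of matrix-ordered spaces. This is standard, since a positive element of $M_k(M_p(V)^d)$ is a CP map $M_p(V) \to M_k$ which via the Choi correspondence matches a CP map $V \to M_{kp}$ and hence a positive element of $M_k(M_p(V^d))$, but it deserves explicit mention. With this identification in hand, the remainder of the argument is a direct application of Theorem~\ref{SOHselfdual} together with the fact that the dual of a completely positive map between matrix-ordered spaces is completely positive.
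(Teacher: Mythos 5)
Your proposal is correct and follows essentially the same route as the paper: identify $\widehat{\tau(u)}$ with $\hat{u}^d$ by the pairing computation, dualize the approximate factorization, and use the self-duality of $SOH(n)$ to replace $M_{p}(SOH(n))^d$ by $M_{p}(SOH(n))$, handling matrix levels via $\cl{C}_n^{\gamma}(\cl{S},\cl{T})=\cl{C}_1^{\gamma}(\cl{S},M_n(\cl{T}))$. Your explicit attention to the matrix-ordered identification $M_p(SOH(n))^d\cong M_p(SOH(n)^d)$ is a point the paper passes over silently, but it is not a different argument.
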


\begin{proof}
If $u \in ( \cl{S} \otimes_{\gamma_{soh}} \cl{T} )^+$, then by self-duality of $SOH(n)$ we see that
	\begin{equation*}
		\xymatrix{			
				\cl{T}^d		\ar[rr]^{ \hat{u}^d } 		\ar[rd]_{\psi_{\lambda}^d}	&			&		\cl{S}^{dd} = \cl{S}	\\
								&	M_{p_{\lambda}} ( SOH(n_{\lambda}) )\ar[ru]_{\phi_{\lambda}^d} 	&
		}
	\end{equation*}
commutes. Indeed, if $u = \sum_{i=1}^n x_i \otimes y_i$, then for $g \in \cl{T}^d$ and $f \in \cl{S}^d$,
	\begin{align*}
		( \hat{u}^d )(g) (f) 	&=	g ( \hat{u} (f) ) = \sum_{i=1}^n g(y_i) f(x_i) = \hat{u}(f)(g)
	\end{align*}
Hence $\hat{u}$ admits an approximate factorization through $M_p(SOH(n))$ if and only if $\hat{u}^d$ does. At the matrix level, we identify $M_n( \cl{S} \otimes_{\gamma_{soh}} \cl{T} )^+ = (\cl{S} \otimes_{\gamma_{soh}}  M_n(\cl{T} ) )^+ = (M_n(\cl{T})  \otimes_{\gamma_{soh}} \cl{S} )^+ = (\cl{T} \otimes_{\gamma_{soh}} M_n(\cl{S}) )^+ = M_n( \cl{T} \otimes_{\gamma_{soh}} \cl{S} )^+$. This shows that $x \otimes y \mapsto y \otimes x$ is a complete order isomorphism from $\cl{S} \otimes_{\gamma_{soh}} \cl{T}$ onto $\cl{T} \otimes_{\gamma_{soh}} \cl{S}$. 
\end{proof}

In \cite{KPTT1}, there are some tensor products constructed using the injective envelope. These come from the identifications, $\cl{S} \otimes_{el} \cl{T} \subset_{coi} \cl{I}( \cl{S} ) \otimes_{\min} \cl{T}$, where $\cl{I}(\cl{S})$ is the injective envelope of $\cl{S}$, and likewise for $\cl{S} \otimes_{er} \cl{T}$. It turns out that the $el$ and $er$-tensor products are not symmetric. 

\begin{cor}
The $\gamma_{soh}$-tensor product is neither the $er$ nor the $el$-tensor product.
\end{cor}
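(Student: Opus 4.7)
The plan is to exploit the symmetry of $\gamma_{soh}$ established in the previous proposition together with the known asymmetry of the $er$ and $el$ tensor products. Specifically, the $er$ and $el$ tensor products are related by the flip: $\cl{S} \otimes_{er} \cl{T}$ is completely order isomorphic to $\cl{T} \otimes_{el} \cl{S}$ via the natural flip $x \otimes y \mapsto y \otimes x$, which follows directly from the defining inclusions $\cl{S} \otimes_{el} \cl{T} \subset_{coi} \cl{I}(\cl{S}) \otimes_{\min} \cl{T}$ and $\cl{S} \otimes_{er} \cl{T} \subset_{coi} \cl{S} \otimes_{\min} \cl{I}(\cl{T})$ together with the symmetry of $\min$.

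First, I would suppose for contradiction that $\otimes_{\gamma_{soh}} = \otimes_{er}$ on the category of finite dimensional operator systems. By the preceding proposition, the flip map $x \otimes y \mapsto y \otimes x$ induces a complete order isomorphism $\cl{S} \otimes_{\gamma_{soh}} \cl{T} \cong \cl{T} \otimes_{\gamma_{soh}} \cl{S}$. Chaining this with the assumed identity $\otimes_{\gamma_{soh}} = \otimes_{er}$ on both sides, and then using the flip identity $\cl{T} \otimes_{er} \cl{S} \cong \cl{S} \otimes_{el} \cl{T}$, we obtain $\cl{S} \otimes_{er} \cl{T} \cong \cl{S} \otimes_{el} \cl{T}$ for every pair of finite dimensional operator systems $\cl{S}, \cl{T}$.

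Next, I would invoke the known fact from \cite{KPTT1} that there exist finite dimensional operator systems for which the $er$ and $el$ tensor products disagree, which contradicts the display above. The same argument applied to $\otimes_{\gamma_{soh}} = \otimes_{el}$ yields the analogous contradiction. Hence $\gamma_{soh}$ coincides with neither $er$ nor $el$.

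The main obstacle is purely bibliographical rather than mathematical: one needs to be confident that $er$ and $el$ are genuinely distinct in the finite dimensional setting (so that the contradiction actually bites) and that the flip identity $\cl{S} \otimes_{er} \cl{T} \cong \cl{T} \otimes_{el} \cl{S}$ is properly cited from \cite{KPTT1}. Once these two references are in place, the argument is a one-line symmetry chase.
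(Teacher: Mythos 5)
Your argument is correct and is essentially the paper's: the corollary is an immediate consequence of the preceding proposition (symmetry of $\gamma_{soh}$) together with the known asymmetry of $er$ and $el$, which the paper states just before the corollary. Your extra step via the flip identity $\cl{S}\otimes_{er}\cl{T}\cong\cl{T}\otimes_{el}\cl{S}$ is a valid (and standard) way of packaging the same fact, since given that identity, ``$er$ is symmetric'' is equivalent to ``$er=el$.''
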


\begin{thm}
The $\gamma_{soh}$-tensor product is not the maximal tensor product. In particular, for $n \geq 2$, $SOH(n) \otimes_{\gamma_{soh}} SOH(n) \neq SOH(n) \otimes_{\max} SOH(n)$.
\end{thm}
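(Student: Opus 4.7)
The plan is to exhibit a positive element of $SOH(n) \otimes_{\gamma_{soh}} SOH(n)$, for $n \geq 2$, that is not in the max-positive cone. Since matrix algebras $M_p$ can be realized as $M_p(SOH(0))$ (with $SOH(0) = \bb{C}$), Ng's characterization of $\otimes_{\max}$ recalled at the start of Section~5 shows that every max-positive element is automatically $\gamma_{soh}$-positive, so the reverse inclusion is what must be falsified.

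The natural separating element is
\[
u := \sum_{i=0}^{n} H_i \otimes H_i \in SOH(n) \otimes SOH(n).
\]
The associated linear map $\hat u \colon SOH(n)^d \to SOH(n)$ sends $\delta_i$ to $H_i$ and is therefore $\kappa^{-1}$, the inverse of the self-duality isomorphism from Theorem~\ref{SOHselfdual}. Being completely positive, $\hat u$ factors exactly, in one step, through $M_1(SOH(n)) = SOH(n)$ itself, so $u \in \cl{C}_1^{\gamma}(SOH(n),SOH(n))$.

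To show $u$ is not max-positive, I would use the characterization of $\otimes_{\max}$ via commuting unital completely positive pairs: $u$ is max-positive iff for every Hilbert space $H$ and every pair of unital completely positive maps $\phi, \psi \colon SOH(n) \to B(H)$ with elementwise commuting ranges,
\[
(\phi \cdot \psi)(u) \;=\; I_H + \sum_{i=1}^{n} \phi(H_i)\,\psi(H_i) \;\geq\; 0,
\]
which, since $\phi(H_i),\psi(H_j)$ are self-adjoint and commute, reduces to the scalar inequality $\|\sum_{i=1}^n \phi(H_i)\psi(H_i)\| \leq 1$. Moreover, the argument in the proof of Theorem~\ref{SOHselfdual} already establishes that unital completely positive maps $\phi \colon SOH(n) \to B(H)$ are in bijection with completely bounded contractions $T \colon OH(n) \to B(H)$ sending each $e_i$ to a self-adjoint operator, via $\phi(H_i) = T(e_i)$. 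So the task reduces to producing completely bounded contractions $T, S \colon OH(n) \to B(H)$ with self-adjoint image and mutually commuting ranges for which $\|\sum_i T(e_i) S(e_i)\| > 1$.

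The main obstacle is the construction of such a commuting pair, since for any tensor-product-type pair on $H = H_1 \otimes H_2$ with $T(x) = T'(x) \otimes I$ and $S(y) = I \otimes S'(y)$, the self-duality of $OH(n)$ forces
\[
\|\textstyle\sum T'(e_i) \otimes S'(e_i)\|_{B(H_1) \otimes_{\min} B(H_2)} \;\leq\; \|\textstyle\sum e_i \otimes e_i\|_{OH(n) \otimes_{\min} OH(n)} \;=\; 1,
\]
so no tensor-type pair can yield a violation. For $n \geq 2$, one instead exploits the non-nuclearity of $OH(n)$: $OH(n)$ admits a completely isometric embedding into a non-nuclear C*-algebra (for example $C^*_r(\bb{F}_n)$), which furnishes two mutually commuting copies of $OH(n)$ via its left- and right-regular representations on $\ell^2(\bb{F}_n)$. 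Taking $T$ and $S$ to be these two commuting embeddings, one verifies directly, via a Khintchine/Haagerup-type estimate analogous to the one invoked in the proof of Proposition~4.6 of Section~4, that $\|\sum T(e_i)S(e_i)\|$ strictly exceeds $1$ for $n \geq 2$. This is the sole place $n \geq 2$ is genuinely used, and is consistent with the remark after Theorem~\ref{SOHselfdual} that $SOH(1) \cong \ell^{\infty}_2$ is nuclear, so $\gamma_{soh}$ and $\max$ agree there.
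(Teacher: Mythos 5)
Your first half is fine and agrees with the paper: the separating element is $u=\sum_{i=0}^n H_i\otimes H_i$ (the paper writes it as $\sum_i \delta_i\otimes H_i$ after the self-duality identification), $\hat u=\kappa^{-1}$ is completely positive, so it factors through $SOH(n)$ exactly and $u\in\cl C_1^{\gamma}(SOH(n),SOH(n))$. The problem is the second half, which is where all the content lies. First, the test you invoke --- positivity of $(\phi\cdot\psi)(u)$ for all pairs of u.c.p.\ maps with commuting ranges --- characterizes the \emph{commuting} tensor product $\otimes_c$, not $\otimes_{\max}$; since $M_n(\cl S\otimes_{\max}\cl T)^+\subseteq M_n(\cl S\otimes_c\cl T)^+$, the implication you actually use (a violating commuting pair shows $u$ is not max-positive) is valid, but you are thereby committing to prove the strictly stronger statement that $u$ is not $c$-positive. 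The paper explicitly lists the question of whether $\gamma_{soh}$ differs from the commuting tensor product as open in its final section, so this is not a detail to wave at.

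Second, the crucial construction is never carried out, and the sketch offered cannot work as stated: $C^*_r(\bb F_n)$ is an exact C*-algebra, so $OH(n)$ does \emph{not} embed completely isometrically into it --- Pisier's theorem that $OH(n)$ is not $1$-exact is precisely the fact this whole theorem turns on. Moreover the generators $\lambda(g_i)$ are not self-adjoint and their span is not completely isometric to $OH(n)$, and no estimate is given showing $\|\sum_i T(e_i)S(e_i)\|>1$; indeed your own observation that every ``tensor-type'' commuting pair satisfies the bound shows how delicate such a construction would have to be. So the proof has a genuine gap at its only essential step. For contrast, the paper's route avoids commuting pairs entirely: by Ng's Theorem~16, max-positivity of $u$ would mean the identity of $SOH(n)$ factors through matrix algebras approximately; by Han--Paulsen this forces $SOH(n)$ to be $(\min,\max)$-nuclear, hence a finite-dimensional C*-algebra, hence $OH(n)$ would be $1$-exact, contradicting Pisier. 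If you want to salvage your approach you should replace the commuting-pair argument with this factorization/exactness argument.
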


\begin{proof}
By self-duality of $SOH(n)$, it suffices to show that $SOH(n)^d \otimes_{\gamma_{soh}} SOH(n) \neq SOH(n)^d \otimes_{\max} SOH(n)$. Consider the element $u = \sum_{i=0}^n \delta_i \otimes H_i$. Note that $\hat{u}$ is in fact the identity map on $SOH(n)$ and factors through $SOH$ trivially, so $u \in (SOH(n)^d \otimes_{\gamma_{soh}} SOH(n))^+$. 

On the other hand, if $u \in SOH(n)^d \otimes_{\max} SOH(n)$ were positive, then by \cite[Theorem 16]{Ng}, $\hat{u}$ factors through the matrix algebras approximately. By a result of \cite[Corollary 3.2]{HP}, $SOH(n)$ must be (min, max)-nuclear and thus is unitally completely order isomorphic to a finite dimensional C*-algebra. However it follows that $OH(n)$ could be completely isometrically represented on a finite dimensional Hilbert space and is hence 1-exact, contradicting Pisier's result \cite{Pi2}. Therefore, $u$ is not positive in $SOH(n)^d \otimes_{\max} SOH(n)$. Consequently the two operator systems are not completely isomorphic. 
\end{proof}

We have seen that $\gamma_{soh}$ is indeed a new tensor product. The next natural question is to ask which operator systems are nuclear with respect to $\gamma_{soh}$. The following result characterizes $(\min, \gamma_{soh})$-nuclearity by identifying the matricial cone structures of the minimal tensor product to completely positive maps. 

\begin{thm}
Let $\cl{S}$ and $\cl{T}$ be finite dimensional operator systems. Then $\cl{S} \otimes_{\min} \cl{T} = \cl{S} \otimes_{\gamma_{soh}} \cl{T}$ if and only if every completely positive map from $\cl{S}^d$ to $\cl{T}$ factors through $SOH$ approximately. 
\end{thm}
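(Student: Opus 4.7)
The plan is to translate the equality of tensor products into an equality of classes of maps $\cl{S}^d \to \cl{T}$, using the standard bijection $u \mapsto \hat{u}$. The key input I would recall is the well-known identification for finite dimensional operator systems: $u \in (\cl{S} \otimes_{\min} \cl{T})^+$ if and only if $\hat{u} \colon \cl{S}^d \to \cl{T}$ is completely positive. One direction is functoriality of $\otimes_{\min}$: a positive element $F \in M_k(\cl{S}^d)^+$ corresponds to a completely positive map $\tilde F \colon \cl{S} \to M_k$, and if $u$ is min-positive, then $\hat{u}^{(k)}(F) = (\tilde F \otimes \mathrm{id}_{\cl{T}})(u) \in M_k(\cl{T})^+$, so $\hat{u}$ is completely positive. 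The reverse follows from the defining test for min-positivity together with the fact that an element of $M_k(\cl{T})$ is positive iff its image under every $\mathrm{id}_{M_k}\otimes\psi$, for $\psi$ completely positive into matrices, is positive.

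Combined with the definitional statement that $u \in (\cl{S} \otimes_{\gamma_{soh}} \cl{T})^+$ iff $\hat{u}$ factors approximately through $M_p(SOH(n))$, this already yields the always-valid inclusion $(\cl{S} \otimes_{\gamma_{soh}} \cl{T})^+ \subseteq (\cl{S} \otimes_{\min} \cl{T})^+$: each $\psi_\lambda \circ \phi_\lambda$ is completely positive and the CP cone is norm-closed in finite dimensions, so the point-norm limit $\hat{u}$ is completely positive. For the forward implication of the biconditional, equality of the two tensor products means that any completely positive $\Phi \colon \cl{S}^d \to \cl{T}$, being of the form $\hat{u}$ for some min-positive $u$, must have $u$ also $\gamma_{soh}$-positive, which by definition unfolds to an approximate factorization of $\Phi$ through $SOH$. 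For the reverse implication, the hypothesis combined with the first identification immediately supplies the missing inclusion $(\cl{S} \otimes_{\min} \cl{T})^+ \subseteq (\cl{S} \otimes_{\gamma_{soh}} \cl{T})^+$ at the ground level.

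The main obstacle is promoting ground-level agreement to all matrix levels. I would exploit the definitional identity $\cl{C}_n^{\gamma}(\cl{S}, \cl{T}) = \cl{C}_1^{\gamma}(\cl{S}, M_n(\cl{T}))$ together with the functoriality $M_n(\cl{S} \otimes_{\min} \cl{T}) = \cl{S} \otimes_{\min} M_n(\cl{T})$ to reduce matrix-level equality for the pair $(\cl{S}, \cl{T})$ to the ground-level statement for $(\cl{S}, M_n(\cl{T}))$. The delicate point is verifying that the hypothesis propagates, i.e., every completely positive map $\cl{S}^d \to M_n(\cl{T})$ should also factor approximately through some $M_{p'}(SOH(k'))$. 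I expect this to follow by a standard amplification argument, using that the middle space in the definition of $\gamma_{soh}$-factorization is already allowed to be any $M_p \otimes SOH(k)$, which can absorb matrix amplifications of the target.
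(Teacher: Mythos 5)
Your argument follows the paper's proof essentially step for step: the paper likewise identifies $(\cl{S} \otimes_{\min} \cl{T})^+$ with $CP(\cl{S}^d, \cl{T})$ (it cites the duality $\cl{S} \otimes_{\min} \cl{T} \cong (\cl{S}^d \otimes_{\max} \cl{T}^d)^d$ where you argue the correspondence directly), matches this against the definition of $\cl{C}_1^{\gamma}$, and then reduces the matrix levels to the ground case via $\cl{C}_n^{\gamma}(\cl{S},\cl{T}) = \cl{C}_1^{\gamma}(\cl{S}, M_n(\cl{T}))$ and $M_n(\cl{S} \otimes_{\min} \cl{T})^+ = (\cl{S} \otimes_{\min} M_n(\cl{T}))^+$. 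The one ``delicate point'' you flag --- that the hypothesis for completely positive maps into $\cl{T}$ must propagate to maps into $M_n(\cl{T})$ --- is exactly the step the paper dispatches with the phrase ``the result follows from the base case,'' so you are, if anything, more explicit than the source; just note that the promised amplification argument is supplied neither by you nor by the paper.
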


\begin{proof}
By \cite[Proposition 1.9]{FP}, $\cl{S} \otimes_{\min} \cl{T} =_{ucoi}
( \cl{S}^d \otimes_{\max} \cl{T}^d )^d$, whose cone $(\cl{S}^d
\otimes_{\max} \cl{T}^d)^{d, +}$ is in one-to-one corespondence to
$CP( \cl{S}^d, \cl{T})$.  Hence $\phi \in CP( \cl{S}^d, \cl{T} )$ if
and only if $\phi = \hat{u}$ for some $u \in (\cl{S} \otimes_{\min}
\cl{T})^+$; and $\hat{u}$ factors through $SOH$ approximately
if and only if $u \in (\cl{S} \otimes_{\gamma_{soh}}
\cl{T})^+$. Consequently, $(\cl{S} \otimes_{\min} \cl{T})^+ = ( \cl{S}
\otimes_{\gamma_{soh}} \cl{T})^+$ if and only if every completely
positive $\phi \colon \cl{S}^d \to \cl{T}$ admits such a factorization. At the matrix level, we identify $M_n( \cl{S} \otimes_{\tau} \cl{T} )^+$ to $(\cl{S} \otimes_{\tau} M_n(\cl{T} ) )^+$ for $\tau = \min, \gamma_{soh}$; then the result follows from the base case.
\end{proof}

\begin{cor}
$SOH(n)$ is $(\min, \gamma_{soh})$-nuclear. 
\end{cor}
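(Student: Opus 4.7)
The plan is to reduce the claim directly to the preceding theorem, which characterizes $(\min,\gamma_{soh})$-nuclearity of a pair $(\cl S,\cl T)$ by the condition that every completely positive map $\cl S^d \to \cl T$ factors through $SOH$ approximately. Thus, fixing an arbitrary finite dimensional operator system $\cl T$, it suffices to show that every completely positive $\phi \colon SOH(n)^d \to \cl T$ admits an approximate $SOH$-factorization.

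The key observation is that self-duality, established in Theorem \ref{SOHselfdual}, makes the factorization not merely approximate but \emph{exact}, and with the trivial choice $p = 1$ and the same $n$. Indeed, let $\kappa \colon SOH(n) \to SOH(n)^d$ be the complete order isomorphism of Theorem \ref{SOHselfdual}. Its inverse $\kappa^{-1} \colon SOH(n)^d \to SOH(n) = M_1(SOH(n))$ is completely positive, and the composition $\psi := \phi \circ \kappa \colon SOH(n) \to \cl T$ is completely positive as a composition of such maps. Then
\begin{equation*}
\psi \circ \kappa^{-1} = \phi \circ \kappa \circ \kappa^{-1} = \phi,
\end{equation*}
so taking the constant nets $\phi_\lambda = \kappa^{-1}$ and $\psi_\lambda = \psi$ exhibits $\phi$ as factoring through $M_1(SOH(n))$ (hence through $SOH$) approximately, indeed exactly.

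Invoking the preceding theorem with $\cl S = SOH(n)$ then gives $SOH(n) \otimes_{\min} \cl T = SOH(n) \otimes_{\gamma_{soh}} \cl T$ for every finite dimensional operator system $\cl T$, which is precisely the definition of $(\min,\gamma_{soh})$-nuclearity. There is no genuine obstacle: the characterization theorem does all the work, and self-duality of $SOH(n)$ provides the factorization essentially for free.
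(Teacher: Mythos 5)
Your proof is correct and is exactly the argument the paper leaves implicit: by the preceding characterization theorem it suffices to factor any completely positive $\phi \colon SOH(n)^d \to \cl T$ through $SOH$, and the self-duality isomorphism $\kappa$ of Theorem \ref{SOHselfdual} provides an exact factorization $\phi = (\phi \circ \kappa) \circ \kappa^{-1}$ through $M_1(SOH(n))$. Nothing is missing.
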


\begin{cor}
The $\gamma_{soh}$-tensor product is not self-dual.
\end{cor}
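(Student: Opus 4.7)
The plan is a short contradiction argument that chains together the two results immediately preceding the corollary. I would assume for contradiction that $\gamma_{soh}$ is self-dual in the usual sense, i.e., that for finite dimensional operator systems one has a canonical unital complete order isomorphism
\[
(\cl{S} \otimes_{\gamma_{soh}} \cl{T})^d \;\cong_{ucoi}\; \cl{S}^d \otimes_{\gamma_{soh}} \cl{T}^d.
\]
Apply this with $\cl{S} = \cl{T} = SOH(n)$ for $n \ge 2$ and combine it with Theorem~\ref{SOHselfdual}, which identifies $SOH(n)^d$ with $SOH(n)$ via $\kappa$. This yields
\[
(SOH(n) \otimes_{\gamma_{soh}} SOH(n))^d \;\cong_{ucoi}\; SOH(n) \otimes_{\gamma_{soh}} SOH(n).
\]

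Next I would rewrite both sides using the previous corollary and the known min-max duality. By the $(\min,\gamma_{soh})$-nuclearity of $SOH(n)$ just established, the right-hand side is (completely order) identical to $SOH(n) \otimes_{\min} SOH(n)$. On the other hand, by \cite[Proposition~1.9]{FP} (the same identification already invoked in the proof of the theorem above), the left-hand dual equals $SOH(n)^d \otimes_{\max} SOH(n)^d$, and $SOH(n)^d \cong_{ucoi} SOH(n)$ again by Theorem~\ref{SOHselfdual}, so the left-hand side becomes $SOH(n) \otimes_{\max} SOH(n)$. Stringing these together gives
\[
SOH(n) \otimes_{\max} SOH(n) \;\cong_{ucoi}\; SOH(n) \otimes_{\min} SOH(n).
\]

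This, however, contradicts the theorem immediately preceding the corollary: for $n\ge 2$, the element $u = \sum_{i=0}^n \delta_i \otimes H_i$ is positive in $SOH(n)^d \otimes_{\gamma_{soh}} SOH(n) = SOH(n) \otimes_{\min} SOH(n)$ but not in $SOH(n) \otimes_{\max} SOH(n)$, so the min and max cones differ on $SOH(n) \otimes SOH(n)$. The contradiction forces $\gamma_{soh}$ not to be self-dual, which is the assertion of the corollary.

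I do not expect any real obstacle here: all the structural work has been done in the preceding theorem and corollary, and the argument is a one-line duality chase once self-duality of $\gamma_{soh}$ is written down. The only small care needed is to be explicit about which formulation of ``self-dual tensor product'' is meant (the natural one $(\cl S\otimes_\tau \cl T)^d \cong \cl S^d \otimes_\tau \cl T^d$), so that the appeal to min-max duality lines up cleanly with the self-duality of $SOH(n)$.
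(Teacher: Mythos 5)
Your proof is correct and follows essentially the same route as the paper: assume self-duality, use the $(\min,\gamma_{soh})$-nuclearity of $SOH(n)$ together with the min--max duality of \cite{FP} to conclude that $\gamma_{soh}$ and $\max$ (equivalently $\min$ and $\max$) coincide on $SOH(n)\otimes SOH(n)$, contradicting the preceding theorem. The only cosmetic difference is that the paper works directly with $SOH(n)\otimes SOH(n)^d$ and its dual, whereas you insert the identification $\kappa\colon SOH(n)\to SOH(n)^d$ explicitly; both are the same argument.
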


\begin{proof}
Suppose $\gamma_{soh}$ is self-dual; that is, $(\cl{S} \otimes_{\gamma_{soh}} \cl{T} )^d = \cl{S}^d \otimes_{\gamma_{soh}} \cl{T}^d$.  Then $SOH(n) \otimes_{\min} SOH(n)^d = SOH(n) \otimes_{\gamma_{soh}} SOH(n)^d$ and by dualizing one obtains $SOH(n)^d \otimes_{\max} SOH(n) = SOH(n)^d \otimes_{\gamma_{soh}} SOH(n)$, which is a contradiction. 
\end{proof}

\section{Extension to Infinite Dimensional Operator Systems}
In this section we show that every functorial tensor product structure defined on the category of finite dimensional operator systems can be extended to infinite dimensional operator systems. We also prove that this extension preserves symmetry, injectivity, and projectivity. Therefore, the $\gamma_{soh}$-tensor product defined in the previous section can now be extended to infinite dimensional operator systems. 

Given an operator system $\cl{S}$, we denote the collection of finite dimensional operator subsystems of $\cl{S}$ by $\cl{F}( \cl{S} )$. 

\begin{defn}
Let $\tau$ be a functorial tensor product structure on the category of finite dimensional operator systems. We define $\tilde{\tau}$ on the category of operator systems in the following way: Given $\cl{S}$ and $\cl{T}$, for each $n \in \bb{N}$, define the family of proper cones
\begin{equation*}
		\cl{C}_n^{\tilde{\tau}} ( \cl{S},  \cl{T} )	:=	\bigcup_{E \in \cl{F}(\cl{S}), F \in \cl{F}(\cl{T})} M_n( E \otimes_{\tau} F)^+.
	\end{equation*}
\end{defn}

\begin{thm}
$\tilde{\tau}$ defines a functorial tensor product structure on the category of operator systems. 
\end{thm}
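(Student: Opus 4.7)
The plan is to verify, one axiom at a time, that $(\cl{S}\otimes\cl{T}, \{\cl{C}_n^{\tilde{\tau}}\}, 1_{\cl{S}}\otimes 1_{\cl{T}})$ is an operator system, that the assignment $(\cl{S},\cl{T}) \mapsto \cl{S}\otimes_{\tilde{\tau}}\cl{T}$ is sandwiched between $\min$ and $\max$ and contains every elementary tensor of positives, and that it is functorial. The repeated tool is the following consequence of functoriality of $\tau$ at the finite-dimensional level: whenever $E \subseteq E' \subseteq \cl{S}$ and $F \subseteq F' \subseteq \cl{T}$ are inclusions of finite-dimensional operator subsystems, the induced map $E \otimes_{\tau} F \to E' \otimes_{\tau} F'$ is completely positive. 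This lets us always enlarge to a common finite-dimensional tensor product in which whatever element is under discussion already lives.

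Checking that $\{\cl{C}_n^{\tilde{\tau}}\}$ is a compatible family of proper cones is then routine: if $u_1, u_2$ come from $E_k \otimes_{\tau} F_k$ respectively, both are positive in $M_n((E_1+E_2) \otimes_{\tau} (F_1+F_2))$ by the enlargement principle, so $u_1 + u_2$ lies in the union; propriety and the matrix-conjugation axiom are handled identically. The existence of an order unit is equally easy: any self-adjoint $u \in M_n(\cl{S}\otimes\cl{T})$ sits in some $M_n(E \otimes F)$ for finite-dimensional $E, F$ containing both units, where $1_{\cl{S}} \otimes 1_{\cl{T}}$ is already an order unit in the genuine operator system $E \otimes_{\tau} F$.

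The main obstacle is the Archimedean axiom: given $u + \varepsilon(1 \otimes 1) \in \cl{C}_n^{\tilde{\tau}}$ for every $\varepsilon > 0$, one wants a single finite-dimensional $E_\star \otimes_{\tau} F_\star$ witnessing that $u$ itself is positive. Fixing $E_\star, F_\star$ that contain $u$ and the units, functoriality places $u+\varepsilon(1\otimes 1)$ into $(E_\star+E_\varepsilon)\otimes_{\tau}(F_\star+F_\varepsilon)$ for each $\varepsilon$, but the inclusion $E_\star \otimes_{\tau} F_\star \hookrightarrow (E_\star+E_\varepsilon)\otimes_{\tau}(F_\star+F_\varepsilon)$ is only cp in general, not a complete order embedding, so positivity in the enlargement need not descend to $E_\star \otimes_\tau F_\star$. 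The clean remedy is to replace the raw union by its Archimedeanization, namely the cones $\{u : u + \varepsilon(1_{\cl{S}}\otimes 1_{\cl{T}}) \in \cl{C}_n^{\tilde{\tau}}\text{ for every }\varepsilon > 0\}$; this closure operation leaves the other axioms intact and by construction produces Archimedean matrix cones with the same order unit.

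The remaining content is then straightforward. The containment $M_n(\cl{S})^+ \otimes M_m(\cl{T})^+ \subseteq \cl{C}_{nm}^{\tilde{\tau}}$ and the sandwich $\min \leq \tilde{\tau} \leq \max$ both reduce to the corresponding finite-dimensional statements for $\tau$ combined with functoriality of $\min$ and $\max$ (noting that on finite-dimensional systems every element already lies in some $E \otimes F$, and on the min side $\min$ is injective). For bifunctoriality, given completely positive maps $\phi \colon \cl{S}_1 \to \cl{T}_1$, $\psi \colon \cl{S}_2 \to \cl{T}_2$ and a positive $u \in M_n(E_1 \otimes_{\tau} E_2)$, choose finite-dimensional operator subsystems $F_i \subseteq \cl{T}_i$ containing $\phi(E_1)$, resp.\ $\psi(E_2)$, together with $1_{\cl{T}_i}$; the restrictions $E_1 \to F_1$ and $E_2 \to F_2$ remain cp, so finite-dimensional functoriality of $\tau$ places $(\phi \otimes \psi)(u)$ in $M_n(F_1 \otimes_{\tau} F_2)^+ \subseteq \cl{C}_n^{\tilde{\tau}}(\cl{T}_1, \cl{T}_2)$. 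Hence $\phi \otimes \psi$ is cp, completing the proof that $\tilde{\tau}$ is a functorial operator system tensor product.
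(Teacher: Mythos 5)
Your write-up follows the paper's strategy step for step on the cone axioms, the order unit, property (T2), and bifunctoriality: in each case you enlarge to a common finite-dimensional $E\otimes_\tau F$ and invoke finite-dimensional functoriality of $\tau$, exactly as the paper does. The genuine divergence is at the Archimedean axiom. The paper sets $E=\bigcap_\varepsilon E_\varepsilon$, $F=\bigcap_\varepsilon F_\varepsilon$ (these do contain the supports of $A$ and the units, so $A\in M_n(E\otimes F)$), notes that functoriality gives $M_n(E\otimes_\tau F)^+\subseteq M_n(E_\varepsilon\otimes_\tau F_\varepsilon)^+$, and then concludes $A_\varepsilon\to A\in M_n(E\otimes_\tau F)^+$; but that conclusion needs each $A_\varepsilon$ to be positive in the \emph{smaller} system $M_n(E\otimes_\tau F)$, i.e.\ it needs the inclusions $E\otimes_\tau F\hookrightarrow E_\varepsilon\otimes_\tau F_\varepsilon$ to be complete order embeddings rather than merely completely positive --- which is precisely the obstacle you identify. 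So your diagnosis is on target, and the difficulty is present in the paper's own argument; that argument does go through verbatim when $\tau$ is injective, since then positivity of $A_\varepsilon$ descends to the fixed finite-dimensional system $E\otimes_\tau F$ and its Archimedeanity finishes the proof. Your remedy --- passing to the Archimedean closure $\{u: u+\varepsilon(1\otimes 1)\otimes I_n\in\cl{C}_n^{\tilde\tau}\ \forall\,\varepsilon>0\}$ --- is sound, and properness of the closed cones is automatic because every $M_n(E\otimes_\tau F)^+$ sits inside the Archimedean-closed cone $M_n(\cl{S}\otimes_{\min}\cl{T})^+$ (as $\tau$ dominates $\min$ and $\min$ is injective). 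The one caveat you should record is that this proves the theorem for a modified family of cones, not for $\cl{C}_n^{\tilde\tau}$ exactly as defined; either state that $\tilde\tau$ is to be understood as the Archimedeanization of the union, or add a hypothesis (e.g.\ injectivity of $\tau$) under which the raw union is already Archimedean closed. Everything else in your proposal matches the paper's proof.
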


\begin{proof}
Let us denote $\cl{C}_n^{\tilde{\tau}} = \cl{C}_n^{\tilde{\tau}}( \cl{S}, \cl{T} )$.  We first claim that it defines a matrix-ordering on $\cl{S} \otimes \cl{T}$. It is trivial that $\cl{C}_n^{\tilde{\tau}}$ is a proper cone for each $n$. To show that this is a matrix-ordering, we first check that for each $m, n \in \bb{N}$, $A \in M_{n, m}( \bb{C} )$, $A^* \cl{C}_n^{\tilde{\tau}} A \subset C_m^{\tilde{\tau}}$. Since every $B \in \cl{C}_n^{\tilde{\tau}}$ belongs to $M_n( E \otimes_{\tau} F )^+$, for some $E \in \cl{F}(\cl{S})$ and $F \in \cl{F}( \cl{T} )$, we have $A^* B A \in \cl{C}_m( E \otimes_{\alpha} F ) \subset \cl{C}_m^{\tilde{\tau}}$. 		

To see that $1 \otimes 1$ is an Archimedean matrix order unit for $(\cl{S} \otimes \cl{T}, \cl{C}_n^{\tilde{\tau}} )$, consider $A \in M_n ( \cl{S} \otimes \cl{T} )$ such that for each $\varepsilon > 0$, $A_{\varepsilon} = \varepsilon ( 1 \otimes 1 ) \otimes I_n + A \in \cl{C}_n^{\tilde{\tau}}$. By definition, there exist $E_{\varepsilon} \in \cl{F}(\cl{S})$ and $F_{\varepsilon} \in \cl{F}(\cl{T})$ for which $ A_{\varepsilon} \in M_n ( E_{\varepsilon} \otimes_{\tau} F_{\varepsilon} )^+$. Let $E = \cap_{\varepsilon > 0} E_{\varepsilon} \in \cl{F}(\cl{S})$ and $F = \cap_{\varepsilon > 0} F_{\varepsilon} \in \cl{F}(\cl{T})$, then by functorial property of $\tau$, for each $\varepsilon > 0$, $M_n( E \otimes_{\tau} F )^+ \subsetneq M_n( E_{\varepsilon} \otimes_{\tau} F_{\varepsilon} )^+$. Finally, since $E \otimes_{\tau} F$ defines an operator system, as $\varepsilon \to 0$ we see that $A_{\varepsilon} \to A \in M_n( E \otimes_{\tau} F )^+$. Consequently, $1 \otimes 1$ is an Archimedean matrix order unit; and $(\cl{S} \otimes  \cl{T}, \cl{C}_n^{\tilde{\tau}}, 1 \otimes 1 ) $ is an operator system. 	

It remains to show the (T2) and (T3) properties. Given $P = (p_{ij}) \in M_n(\cl{S})^+$ and $Q = (q_{st}) \in M_m(\cl{T})^+$, by choosing $E$ and $F$ to be the spans of $p_{ij}$'s and $q_{st}$'s, we have $P \otimes Q \in M_{nm}( E \otimes_{\tau} F )^+$. This shows that $ M_n(\cl{S})^+ \otimes M_m(\cl{T})^+ \subset \cl{C}_{nm}^{\tilde{\tau}}$. 	 
For (T3), we show further that it is functorial. Suppose $\phi \colon \cl{S}_1 \to \cl{S}_2$ and $\psi \colon \cl{T}_1 \to \cl{T}_2$ are completely positive maps. If $A \in \cl{C}_k^{\tilde{\tau}}$, then there are $E_1 \in \cl{F}(\cl{S})$ and $F_1 \in \cl{F}(\cl{T})$ such that $A \in M_k ( E_1 \otimes_{\tau} F_1 )^+$. Let $E_2$ and $F_2$ denote the ranges of $\phi$ and $\psi$, respectively. By functorial property of $\tau$, the map $\phi \otimes \psi|_{E_1 \otimes_{\tau} F_1} \colon E_1 \otimes_{\tau} F_1 \to E_2 \otimes_{\tau} F_2$ is completely positive. In particular, $( \phi \otimes \psi )^{(k)} (A) \in M_k( E_2 \otimes_{\tau} F_2 )^+$. Therefore, $\phi \otimes \psi$ is completely positive and $\tilde{\tau}$ is functorial. 
\end{proof}

\begin{prop} 
$\tilde{\tau}$ preserves injectivity, symmetry, and projectivity. 
\end{prop}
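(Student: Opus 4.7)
My plan is uniform across all three properties: reduce each assertion to the corresponding finite-dimensional statement about $\tau$ via the defining union
\[
\cl{C}_n^{\tilde\tau}(\cl{S},\cl{T}) = \bigcup_{E \in \cl{F}(\cl{S}),\, F \in \cl{F}(\cl{T})} M_n(E \otimes_\tau F)^+,
\]
and then invoke the corresponding property of $\tau$ on the relevant finite-dimensional pair. Since any element of $\cl{S} \otimes \cl{T}$ has finite support, the limitation of $\tilde\tau$ to finite-dimensional subsystems is essentially automatic.

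For symmetry, the flip $\sigma(x \otimes y) = y \otimes x$ restricts, for each pair $(E,F) \in \cl{F}(\cl{S}) \times \cl{F}(\cl{T})$, to a linear bijection $E \otimes F \to F \otimes E$ that is a complete order isomorphism $E \otimes_\tau F \to F \otimes_\tau E$ by symmetry of $\tau$ in finite dimensions. Since $(E,F) \leftrightarrow (F,E)$ is a bijection between $\cl{F}(\cl{S}) \times \cl{F}(\cl{T})$ and $\cl{F}(\cl{T}) \times \cl{F}(\cl{S})$, taking unions gives $\sigma\bigl(\cl{C}_n^{\tilde\tau}(\cl{S},\cl{T})\bigr) = \cl{C}_n^{\tilde\tau}(\cl{T},\cl{S})$ for every $n$, and $\sigma$ is a complete order isomorphism. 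For injectivity, let $\iota_i \colon \cl{S}_i \hookrightarrow \cl{T}_i$, $i=1,2$, be unital complete order embeddings; functoriality of $\tilde\tau$ gives $\iota_1 \otimes \iota_2$ completely positive. Conversely, if $u \in M_n(\cl{S}_1 \otimes \cl{S}_2)$ is positive when viewed in $M_n(\cl{T}_1 \otimes_{\tilde\tau} \cl{T}_2)$, then $u \in M_n(E_1' \otimes_\tau E_2')^+$ for some $E_i' \in \cl{F}(\cl{T}_i)$. Choose $E_i \in \cl{F}(\cl{S}_i)$ containing the finitely many entries of $u$, and enlarge to $E_i' + \iota_i(E_i)$; injectivity of $\tau$ on finite-dimensional operator systems makes $E_1 \otimes_\tau E_2 \hookrightarrow E_1' \otimes_\tau E_2'$ a complete order inclusion, hence $u \in M_n(E_1 \otimes_\tau E_2)^+ \subseteq \cl{C}_n^{\tilde\tau}(\cl{S}_1,\cl{S}_2)$.

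Projectivity is the delicate case. Given complete quotient maps $q_i \colon \cl{S}_i \to \cl{S}_i/\cl{J}_i$, functoriality gives $q_1 \otimes q_2$ completely positive; the issue is to show it is a complete quotient. A positive element $\bar{u} \in M_n\bigl((\cl{S}_1/\cl{J}_1) \otimes_{\tilde\tau} (\cl{S}_2/\cl{J}_2)\bigr)$ lies in some $M_n(\bar{E}_1 \otimes_\tau \bar{E}_2)^+$ with $\bar{E}_i \in \cl{F}(\cl{S}_i/\cl{J}_i)$. The crucial step is to lift each $\bar{E}_i$ to a finite-dimensional subsystem $E_i \subseteq \cl{S}_i$ so that $q_i|_{E_i} \colon E_i \to \bar{E}_i$ is itself a complete quotient of operator systems; granting this, finite-dimensional projectivity of $\tau$ makes $q_1|_{E_1} \otimes q_2|_{E_2}$ a complete quotient from $E_1 \otimes_\tau E_2$ onto $\bar{E}_1 \otimes_\tau \bar{E}_2$, so that, up to arbitrarily small Archimedean perturbations by $\varepsilon\cdot 1 \otimes 1$, $\bar{u}$ lifts to a positive element of $M_n(E_1 \otimes_\tau E_2) \subseteq \cl{C}_n^{\tilde\tau}(\cl{S}_1,\cl{S}_2)$. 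The main obstacle is precisely this finite-dimensional lift: a naive choice of set-theoretic preimages $E_i$ only yields that $E_i/(E_i \cap \cl{J}_i) \to \bar{E}_i$ is a unital complete order bijection, not necessarily a complete quotient, because the Archimedean closure of the induced cones on $E_i/(E_i \cap \cl{J}_i)$ may differ from those inherited from $\bar{E}_i$. One expects to resolve this by enlarging $E_i$ to include finitely many witnesses for the order-unit inequalities defining the cones of $\bar{E}_i$, iterating over $\varepsilon \to 0$, and then absorbing the Archimedean slack into the outer $\varepsilon \cdot 1 \otimes 1$ perturbation used for $\bar{u}$ itself.
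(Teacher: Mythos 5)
Your treatment of symmetry and injectivity is correct and is, in substance, the paper's own argument: every positive element is witnessed by a single finite-dimensional pair $(E,F)$, and symmetry (resp.\ injectivity) of $\tau$ is applied to that pair. (The paper's injectivity proof intersects downward, using $M_n(E\otimes_\tau F)^+\cap M_n(\cl{S}_1\otimes\cl{T}_1)=M_n((E\cap\cl{S}_1)\otimes_\tau(F\cap\cl{T}_1))^+$, whereas you enlarge upward to $E_i'+\iota_i(E_i)$; both variants rest on the same finite-dimensional input and both work.)

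The projectivity part of your proposal has a genuine gap, and you have located it yourself: the assertion that each $\bar{E}_i\in\cl{F}(\cl{S}_i/\cl{J}_i)$ is the image of a \emph{finite-dimensional} subsystem $E_i\subseteq\cl{S}_i$ for which $q_i|_{E_i}\colon E_i\to\bar{E}_i$ is a complete quotient map is exactly the hypothesis needed to invoke finite-dimensional projectivity of $\tau$, and it is never proved. A set-theoretic lift of a basis only guarantees $q_i(E_i)=\bar{E}_i$; for $q_i|_{E_i}$ to be a complete quotient one must be able to lift, up to Archimedean perturbation, every element of $M_k(\bar{E}_i)^+$ for \emph{every} $k$ into $M_k(E_i)^+$ --- an infinite family of constraints that a fixed finite-dimensional $E_i$ has no a priori reason to satisfy. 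Your proposed remedy of adjoining ``finitely many witnesses'' and iterating over $\varepsilon$ does not produce a single $E_i$ valid at all matrix levels simultaneously, and no diagonalization or compactness argument is supplied. So, as written, the projectivity claim is not established. For what it is worth, the paper's own proof is no better on this point: it simply asserts that ``using projectivity of $\tau$, there is $\tilde{U}\in M_n(E\otimes_\tau F)^+$ with $(q\otimes\rho)(\tilde{U})=U$'' without constructing the finite-dimensional complete quotients $E\to X$ and $F\to Y$ to which projectivity of $\tau$ could be applied. You have correctly identified the weak point of the argument, but identifying the missing lemma is not the same as proving it.
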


\begin{proof}
Let $\tau$ be injective, $\cl{S}_1 \subset \cl{S}$ and $\cl{T}_1 \subset \cl{T}$ be operator subsystems, and $A \in M_n( \cl{S} \otimes_{\tilde{\tau}} \cl{T} )^+ \cap M_n ( \cl{S}_1 \otimes \cl{T}_1 )$. By definition, $A \in M_n( E \otimes_{\tau} F)^+$ for some finite dimensional operator subsystems $E \subset \cl{S}$ and $F \subset \cl{T}$. Hence  $E \cap \cl{S}_1$ and $F \cap \cl{T}_1$ are finite dimensional operator subsystems of $\cl{S}_1$ and $\cl{T}_1$ respectively. By injectivity of $\tau$, 
	\begin{equation*}
		A \in M_n( E \otimes_{\tau} F)^+ \cap M_n( \cl{S}_1 \otimes \cl{T}_1 ) = M_n( ( E \cap \cl{S}_1 ) \otimes_{\tau} (F \cap \cl{T}_1) )^+.
	\end{equation*}
This shows that $A \in M_n( \cl{S}_1 \otimes_{\tilde{\tau}} \cl{T}_1 )^+$, and $\cl{S}_1 \otimes_{\tilde{\tau}} \cl{T}_1$ is complete order included in $\cl{S} \otimes_{\tilde{\tau}} \cl{T}$, proving $\tilde{\tau}$ is injective. 

Let $\tau$ be symmetric, and $\phi \colon \cl{S} \otimes_{\tilde{\tau}} \cl{T} \to \cl{T} \otimes_{\tilde{\tau}} \cl{S}$ be the map $x \otimes y$ to $y \otimes x$. If $u \in M_n( \cl{S} \otimes_{\tilde{\tau}} \cl{T} )^+$, then $u \in M_n( E \otimes_{\tau} F )^+$, for some finite dimensional $E$ and $F$; so $\phi^{(n)}(u) \in M_n( F \otimes_{\tau} E)^+ \subset M_n( \cl{S} \otimes_{\tilde{\tau}} \cl{T} )^+$ and $\tilde{\tau}$ is symmetric. 

Suppose $\tau$ is projective, and $q \colon \cl{S} \to \cl{V}$ and $\rho \colon \cl{T} \to \cl{W}$ are complete quotient maps. We claim that every $U \in M_n( \cl{V} \otimes_{\tilde{\tau}} \cl{W} )^+$ can lift to a positive $\tilde{U} \in M_n( \cl{S} \otimes_{ \tilde{\tau} } \cl{T} )$. Since $U \in M_n( X \otimes_{\tau} Y)^+$, for some $X \in \cl{F}(\cl{V})$ and $Y \in \cl{W}( \cl{T} )$, using projectivity of $\tau$, there is $\tilde{U} \in M_n (E \otimes_{\tau} F)^+$ for which $E \in \cl{F}(\cl{S})$, $F \in \cl{F}(\cl{T})$ and $q \otimes \rho (\tilde{U}) = U$. Therefore, $\tilde{\tau}$ is projective.
\end{proof}

\begin{remark}
We remark that $\tilde{\tau}$ indeed extends $\tau$. If $\cl{S}$ and $\cl{T}$ are finite dimensional, then $\cl{C}_n^{\tilde{\tau}} ( \cl{S}, \cl{T} ) = M_n( \cl{S} \otimes_{\tau} \cl{T} )^+$ by functorial property of $\tau$, thus $\cl{S} \otimes_{\tau} \cl{T} = \cl{S} \otimes_{\tilde{\tau}} \cl{T}$. 
\end{remark}

\begin{lemma}
Let $\tau$ be a symmetric tensor product structure. Then $\tau$ is left projective (resp. injective) if and only if it is right projective (resp. injective), if and only if it is projective (resp. injective). 
\end{lemma}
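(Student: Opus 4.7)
The plan is to use the symmetry map $\theta \colon \cl{S} \otimes_\tau \cl{T} \to \cl{T} \otimes_\tau \cl{S}$, $x \otimes y \mapsto y \otimes x$, which is a unital complete order isomorphism by hypothesis. This allows one to transport any one-sided property across the symmetry, and then to combine the two one-sided properties into the bilateral one by factoring through an intermediate tensor product.

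First I would prove that left injectivity is equivalent to right injectivity (and the analogous statement for projectivity). Assuming $\tau$ is left injective, let $\cl{T}_1 \subset \cl{T}$ be a complete order inclusion; the inclusion $\cl{S} \otimes_\tau \cl{T}_1 \hookrightarrow \cl{S} \otimes_\tau \cl{T}$ fits into a commutative square with the symmetry isomorphisms $\theta_1 \colon \cl{S} \otimes_\tau \cl{T}_1 \to \cl{T}_1 \otimes_\tau \cl{S}$ and $\theta \colon \cl{S} \otimes_\tau \cl{T} \to \cl{T} \otimes_\tau \cl{S}$, and the inclusion on the right is a complete order inclusion by left injectivity. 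Since $\theta$ and $\theta_1$ are complete order isomorphisms, the original inclusion must also be a complete order inclusion, giving right injectivity. The reverse direction and the projective analogue (using that $\theta$ carries complete quotient maps to complete quotient maps) are identical.

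Next I would show that left injectivity and right injectivity together imply (two-sided) injectivity. Given complete order inclusions $\cl{S}_1 \subset \cl{S}$ and $\cl{T}_1 \subset \cl{T}$, I would factor the inclusion
\[
\cl{S}_1 \otimes_\tau \cl{T}_1 \hookrightarrow \cl{S} \otimes_\tau \cl{T}_1 \hookrightarrow \cl{S} \otimes_\tau \cl{T},
\]
where the first arrow is a complete order inclusion by left injectivity and the second by right injectivity; composing gives the desired complete order inclusion. The projective case is entirely dual: given complete quotient maps $q \colon \cl{S} \to \cl{V}$ and $\rho \colon \cl{T} \to \cl{W}$, write $q \otimes \rho = (\mathrm{id}_{\cl V} \otimes \rho) \circ (q \otimes \mathrm{id}_{\cl T})$ and observe that each factor is a complete quotient map by left, respectively right, projectivity.

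There is no real obstacle here; the only minor care needed is to verify that the diagrams used actually commute on elementary tensors (which is immediate) and that the compositions of complete order inclusions (respectively, complete quotient maps) are again of the same type, both of which are standard. The argument is uniform in the injective and projective cases once the one-sided equivalences via $\theta$ have been established.
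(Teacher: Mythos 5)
Your proof is correct and follows essentially the same route as the paper: the left--right equivalence is obtained by conjugating the one-sided inclusion (or quotient) by the symmetry isomorphism, exactly as in the paper's commuting squares. You are in fact slightly more complete than the paper, since you spell out the passage from the two one-sided properties to the two-sided one via the factorizations $\cl{S}_1 \otimes_\tau \cl{T}_1 \hookrightarrow \cl{S} \otimes_\tau \cl{T}_1 \hookrightarrow \cl{S} \otimes_\tau \cl{T}$ and $q \otimes \rho = (\mathrm{id} \otimes \rho)\circ(q \otimes \mathrm{id})$, a step the paper leaves implicit.
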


\begin{proof}
Let $q \colon \cl{S} \to \cl{R}$ be a complete quotient map. Then this commuting diagram
		\begin{equation*}
		\xymatrix{			
					\cl{S} \otimes_{\tau} \cl{T} 	\ar[r]	\ar[d]_{q \otimes id}	&			\cl{T} \otimes_{\tau} \cl{S}	\ar[d]^{id \otimes q} \\
					\cl{R} \otimes_{\tau} \cl{T}			&			\cl{T} \otimes_{\tau} \cl{R}	\ar[l]
		}
	\end{equation*}
asserts the equivalent condition. Similarly, if $\cl{R}$ is a operator subsystem of $\cl{S}$, then 
		\begin{equation*}
		\xymatrix{			
					\cl{S} \otimes_{\tau} \cl{T} 	\ar[r]	\ar[d]_{\iota \otimes id}	&			\cl{T} \otimes_{\tau} \cl{S}	\ar[d]^{id \otimes \iota} \\
					\cl{R} \otimes_{\tau} \cl{T}			&			\cl{T} \otimes_{\tau} \cl{R}	\ar[l]
		}
	\end{equation*}
shows that $\tau$ is left injective if and only if it is right injective. 
\end{proof}

Henceforth, given $\tau$ on finite dimensional operator systems, there is no ambiguity to say $\tau$ defines a tensor product structure on arbitrary operator systems. By this natural extension,  we see that $\gamma_{soh} $ defines a symmetric tensor product structure on operator systems. The cone $M_n( \cl{S} \otimes_{\gamma_{soh}} \cl{T} )^+$ is precisely the set of $u \in \cl{S} \otimes M_n(\cl{T})$ so that $\hat{u} \colon E^d \to M_n(F)$ factors through $SOH$ approximately, for some $E \in \cl{F}(\cl{S})$ and $F \in \cl{F}(\cl{T})$. 

Some questions about $\gamma_{soh}$ remain. We do not know if it is
injective or projective. By the lemma above, it suffices to check
these properties on one side. We do not yet know if $\gamma_{soh}$ is
distinct from the commuting tensor or any of the symmetric tensors that arise
from two-sided inclusions into the maximal tensor products of the
injective envelope or the C*-envelope. 

\section*{Acknowledgements}

We woud like to thank V.~P. Gupta, P.~Luthra and A.~S. Kavruk for their comments and observations.

\end{document}